\definecolor{ultramarine}{rgb}{0.07, 0.04, 0.56}  
\definecolor{darkgreen}{rgb}{0,0.7,0}
\definecolor{darkred}{rgb}{0.5,0,0}
\definecolor{ultramarine}{rgb}{0.07, 0.04, 0.56}
\newtheorem{theorem}{Theorem}[section]
\newtheorem{proposition}[theorem]{Proposition}
\newtheorem{lemma}[theorem]{Lemma}
\theoremstyle{remark}
\newtheorem{remark}{Remark}
\newtheorem{definition}[theorem]{Definition}
\begin{document}

\title{A Feynman--Kac representation of a non-conservative and path-dependent nonlinear reaction-diffusion-advection system}

\author{Daniela Morale\thanks{Dept. of Mathematics, University of Milano, \href{mailto:daniela.morale@unimi.it}{daniela.morale@unimi.it}} , Leonardo Tarquini\thanks{Dept. of Mathematics, University of Oslo, \href{mailto:leonardt@math.uio.no}{leonardt@math.uio.no}}, Stefania Ugolini\thanks{Dept. of Mathematics, University of Milano, \href{mailto:stefania.ugolini@unimi.it}{stefania.ugolini@unimi.it}}}

\maketitle
\begin{abstract} 
    We provide a probabilistic interpretation of a weakly parabolic PDE--ODE system with a reaction term, which makes the dynamics non-conservative. As a consequence,   the solution is represented as the density of a sub-probability measure solving a Feynman--Kac-type equation, where the time-marginal law of the underlying process is weighted by a survival probability induced by the reaction. This leads to a coupled stochastic formulation consisting of a non-Markovian stochastic differential equation with path-dependent coefficients and the associated Feynman--Kac-type equation. We prove well-posedness of the resulting stochastic system. Finally, we introduce the corresponding interacting particle system and show that its empirical measure, suitably weighted by the survival probability associated with the reaction rate, converges to the limiting sub-probability.
\end{abstract}

{\bf Keywords: }McKean-type nonlinear stochastic differential equation, probabilistic representation of PDEs, nonlinear reaction-diffusion PDEs, interacting particle systems, propagation of chaos.

{\bf MSC: } 60H10, 60H30,  60J60, 65C35,  58J35
  
\section{Introduction}
The aim of the present paper is to provide a probabilistic representation of the measure-valued solution of the following reaction-diffusion-advection partial differential equation (PDE)
\begin{equation}\label{eq:PDE_nu_intro}
    \partial_t\nu_t = \Delta\nu_t - \nabla\cdot\left[
    b\left(\int_0^t K*\nu_s\,ds,\int_0^t \nabla K*\nu_s\,ds\right)\nu_t\right]-\lambda c_0
    \exp\left(-\lambda\int_0^t K*\nu_s\,ds\right)\nu_t,
\end{equation}
with initial condition $\nu_0(dx)=\rho_0(x)dx$, where
\begin{equation}\label{eq:def_b_PDE}
    b(x,y) := -\varphi_1\lambda c_0\frac{\exp(-\lambda x)y}{\varphi_0+\varphi_1c_0 \exp(-\lambda y)}\quad \forall x,y\in \mathbb{R},
\end{equation}
$c_0,\lambda,\varphi_0,\varphi_1$ are constant parameters, and $K$ is a smooth kernel.

While the model \eqref{eq:PDE_nu_intro}--\eqref{eq:def_b_PDE} arises from the real-world application described in Section \ref{sec:motivation}, the contribution of the present paper lies in its probabilistic interpretation and the analysis of the resulting stochastic system.

The PDE is path--dependent and non-conservative, that is, the total concentration is not conserved, because of the reaction term; hence, even when the initial measure is a probability measure, $\nu_t$ is a sub-probability measure for any $t \in (0,T]$. This non-conservative feature prevents from interpreting equation \eqref{eq:PDE_nu_intro} as a nonlinear Fokker--Planck equation describing the evolution of the time marginal laws of the corresponding stochastic process, solving an SDE whose drift coincides with the transport term appearing in the PDE. Indeed, we interpret the measure-valued solution $\nu_t$ of the PDE \eqref{eq:PDE_nu_intro} as the sub-probability measure solution of the following  nonlinear Feynman--Kac-type equation
\begin{equation}\label{eq:def_nu_intro}
    \nu_t(f) = \mathbb{E}\left[f(Y_t)\exp\left(-\lambda c_0\int_0^t\exp\left(-\lambda\int_0^s (K*\nu_r)(Y_r),dr\right)ds\right)\right],
\end{equation}
for every $t\in[0,T]$ and every Schwartz function $f\in \mathcal{S}(\mathbb{R})$. The  stochastic  process $Y$ is the solution of the associated stochastic differential equation (SDE)
\begin{equation}\label{eq:SDE_con_nu_intro}
    \begin{aligned}
    &Y_t = Y_0 + \int_0^t b\left(\int_0^s (K*\nu_r)(Y_s)dr,\int_0^s (\nabla K*\nu_r)(Y_s)dr\right)ds + \sqrt{2}W_t, \quad t\in[0,T],\\
    &Y_0 \sim \rho_0(x)dx.
    \end{aligned}
\end{equation}

The resulting formulation can be viewed as a Feynman--Kac representation in which the law of the process $Y$ is weighted by a discount factor accounting for the mass loss induced by the reaction term.  Hence, we refer to $\nu_t$ as the Feynman--Kac weighted law of $Y_t$.
 
A different but equivalent approach has been developed in \cite{2025_morale_tarquini_ugolini} via McKean--Vlasov SDEs subject to a killing mechanism.  There, the loss of mass induced by the reaction term is represented through a stopping time $\tau$ and the sub-probability measure solution to \eqref{eq:PDE_nu_intro} is represented as $\nu_t:=\mathbb{P}(Y_t\in\cdot,\tau>t)$. For a discussion on the equivalence of the two approaches, see \cite{morale2026pathdependentmckeanpdesreaction}.

The main contribution of the present paper is to develop the probabilistic analysis of the nonlinear McKean--Feynman--Kac system \eqref{eq:SDE_con_nu_intro}-\eqref{eq:def_nu_intro} and of its particle approximation. More precisely, we show the existence of the sub-probability measure flow $\{\nu_t\}_{t\in[0,T]}$ defined by the Feynman--Kac-type equation \eqref{eq:def_nu_intro}, and we establish that it provides a distributional solution to the PDE \eqref{eq:PDE_nu_intro}. We then prove strong existence and pathwise uniqueness of the solution to the path-dependent McKean--Feynman--Kac SDE \eqref{eq:SDE_con_nu_intro}, whose drift depends on the past evolution of the regularised field generated by $\{\nu_t\}_{t\in[0,T]}$. Finally, we introduce the corresponding mean-field interacting
particle system and prove convergence of the empirical density with Feynman--Kac-type weights by establishing a propagation of chaos result.

There is a long-standing literature on probabilistic interpretations of PDEs in the conservative case; see, for instance,
\cite{2023_Tomasevic_Fournier,2020_tomasevic_talay,2025_GM,2000_stevens,
1990_Oelschlaeger}. The extension of probabilistic representation techniques
for McKean-type equations to a large class of non-conservative PDEs was
developed in
\cite{2017_russo_barbu,2016_Russo,2019_russo_proceedings}. There, the standard probabilistic representation of Fokker--Planck equations is extended to non-conservative nonlinear Fokker--Planck equations by introducing a Feynman--Kac representation of the solution. The present paper extends this framework to the path-dependent setting by introducing a suitable Banach space and norm that accounts both for the survival probability induced by the reaction term and for the integral operator responsible for the non-Markovian character of the drift and discount factor relative to the reaction coefficient. Moreover, particular care is required due to the presence of the gradient of the density in the drift term, especially in the analysis of the regularity properties of the solution to the Feynman--Kac-type equation and in establishing the convergence of the empirical density of the associated particle system.

The paper is organized as follows. Section \ref{sec:motivation} introduces the weakly parabolic PDE--ODE system underlying our analysis, derives the PDE studied in this paper, and discusses its real-world application. Section \ref{sec:prob_interpretation} contains the core of our probabilistic interpretation, where we show that the solution of the Feynman--Kac-type equation \eqref{eq:def_nu_intro} also solves the measure-valued PDE \eqref{eq:PDE_nu_intro}. Section \ref{sect:well_posedness_reg_Feynman-Kac} is devoted to establishing the well-posedness of this equation in view of its path dependence. Building on continuity properties of its solution, Section \ref{sec:Existence_uniqueness_SDE} proves strong existence and uniqueness for the stochastic model \eqref{eq:SDE_con_nu_intro},\eqref{eq:def_nu_intro}. In Section \ref{sec:propagation_chaos}, we introduce the associated particle system and establish propagation of chaos. Finally, Appendix \ref{appendix} collects the proofs of the more technical results.

\paragraph{Notation.}
Let
$(\Omega,\mathcal{F},\mathbb{F}=\{\mathcal{F}_t\}_{t\in[0,T]},\mathbb{P})$
be a filtered probability space. All stochastic processes throughout the paper
are assumed to be defined on this space and adapted to $\mathbb{F}$.

For any Banach space $V$, we denote by $\mathcal{B}(V)$ its Borel
$\sigma$-algebra.

$\mathcal{S}(\mathbb{R})$ denotes the Schwartz space and it is defined as
\begin{equation}\label{eq:def_schwartz_space}
    \mathcal{S}(\mathbb{R}) := \left\{\varphi \in C^\infty(\mathbb{R}) \;:\;\sup_{x\in\mathbb{R}} \left| x^k\varphi^{(n)}(x) \right| < \infty \quad \forall k,n \in \mathbb{N}_0\right\}.
\end{equation}

\section{Motivation}\label{sec:motivation}
We begin by introducing the following weakly parabolic PDE--ODE system, from which the PDE model \eqref{eq:PDE_nu_intro} investigated in this work can be derived.
\begin{equation} \label{eq:modello_Natalini} 
	\begin{cases}
		\partial_t \rho = \nabla\cdot(\varphi(c)\nabla s ) -\lambda\rho c;   \\
		\partial_t c = -\lambda \rho c,  
	\end{cases}\quad (t,x)\in (0,T]\times\mathbb{R},
\end{equation}
where $T>0$, subject to the initial conditions
\begin{equation} \label{eq:modello_Natalini_condition1} 
	\begin{cases}
	    \rho(0,x)= \rho_0(x);\\
        c(0,x) = c_0,
	\end{cases}\quad x\in\mathbb{R}.
\end{equation}

The model \eqref{eq:modello_Natalini} arises in the mathematical description of marble sulphation \cite{2004_ADN,2023_Bonetti_natalini_NLA}, a specific chemical reaction caused by pollution which affects marble surfaces. In \eqref{eq:modello_Natalini}, $c$ denotes the density of calcite, $s$ the porous concentration of sulphur dioxide, and $\rho$ its density in the material. The two quantities are related by $\rho=\varphi(c)s$, where $\varphi(c)$ denotes the porosity of the material.

In system \eqref{eq:modello_Natalini}, $\rho = \varphi(c) s$, $\lambda\in \mathbb{R}_{+}$, and $\varphi(c)=\varphi_0+\varphi_1 c$, where $\varphi_0\in \mathbb R_+$, $\varphi_1 \in \mathbb R$ are such that $\varphi_0,\varphi_0+\varphi_1 c_0 \in (0,\bar{\varphi})$ with $\bar{\varphi}\in\mathbb{R}_+$.

For the sake of simplicity, in the present work   we consider a constant initial condition for $c$ in \eqref{eq:modello_Natalini_condition1}; however, the system is well-posed also in the case $c_0(x)\in (0,C_0)$, such that $C_0-c_0 \in H^1(\mathbb{R})$. As a consequence, for any $(t,x)\in[0,T]\times \mathbb R$, one has that $c(t,x)\in (0,C_0)$ \cite{2005_GN_NLA}. Furthermore, for the well-posedness of the system, it is also required that the initial condition $\rho_0(x) \in (0,S_0)$ and that $\rho_0\in L^2(\mathbb R)$ \cite{2005_GN_NLA,2007_GN_CPDE}. Now, since it is shown that $\rho(t,x) \in (0,S_0)$ for any $(t,x)\in [0,T]\times\mathbb R$, whenever $S_0\le 1$, concentration  $\rho$  is a map from $[0,T]\times \mathbb{R}$ to $[0,1]$. This is the case we consider here.
 
In recent decades, such a system has been well-studied in the deterministic framework both from the analytical and from the numerical point of view \cite{2007_AFNT_TPM,2007_AFNT2_TPM,2004_ADN,2023_Bonetti_natalini_NLA,2019_BCFGN_CPAA,2005_GN_NLA,2007_GN_CPDE}. However, only recently a probabilistic interpretation of solutions as mean-field limits of interacting particles has been investigated. In particular, \cite{2025_morale_tarquini_ugolini} introduced a probabilistic representation via McKean--Vlasov SDEs with killing. Here, we propose an alternative yet equivalent  \cite{morale2026pathdependentmckeanpdesreaction} probabilistic approach, based on a different microscopic description of the particle dynamics.

To obtain \eqref{eq:PDE_nu_intro}, we rewrite system \eqref{eq:modello_Natalini} as a single PDE for the concentration $\rho$ coupled with an explicit function for $c$. To make this precise, let us notice that \eqref{eq:modello_Natalini} may be written in terms of $\rho$ as
\begin{equation}\label{eq:modello_Natalini_riscritto}
    \partial_t\rho = \Delta\rho - \nabla \cdot \left( \frac{\nabla\varphi(c)}{\varphi(c)}\rho \right) - \lambda c \rho.
\end{equation}
We  denote by   $\rho(\cdot,x)(t)$ the integral function of the concentration $\rho$, i.e. 
$$\rho(\cdot,x)(t):=\int_0^t \rho(s,x)ds.$$

For any $(t,x)\in [0,T]\times\mathbb{R}$, the velocity field that the concentration $\rho$ moves with becomes
\begin{equation*}
    \frac{\nabla\varphi(c)}{\varphi(c)}=b\big(\rho(\cdot,x)(t), \nabla \rho(\cdot,x)(t) \big),
\end{equation*}
where $b$ is defined in \eqref{eq:def_b_PDE}, obtained by substituting the following explicit solution of the equation for $c$ as a function of $\rho$:
\begin{equation}\label{eq:c_explicit}
    c(t,x) = c_0\exp\left(-\lambda\int_0^t\rho(u,x)du \right)= c_0\exp\big( -\lambda  \rho(\cdot,x)(t) \big).
\end{equation}
Therefore, system \eqref{eq:modello_Natalini} is equivalent to the following evolution equation for $\rho$ 
\begin{equation}\label{eq:PDE_rho_only}
        \partial_t\rho (t,x)  = \Delta\rho(t,x)   - \nabla \cdot \left( b\big(\rho(\cdot,x)(t), \nabla \rho(\cdot,x)(t) \big)\rho(t,x) \right) - \lambda c_0\exp\big( -\lambda  \rho(\cdot,x)(t) \big) \rho(t,x),
\end{equation}
subject to the initial condition $\rho(0,\cdot) = \rho_0(\cdot)$, where the advection coefficient $b$ is defined in \eqref{eq:def_b_PDE}.

Since the analysis is carried out in the case of a McKean--Vlasov-type drift, the drift of the associated SDE is regularised by introducing a non-local dependence upon $\rho$. Since the advection term in the classical Fokker-Planck equation is related to the drift of the SDE, from \eqref{eq:c_explicit}, the regularisation in the original model \eqref{eq:modello_Natalini} has to be done at the level of the ODE for $c$. This is the underlying reason why we consider a regularisation of the ODE in \eqref{eq:modello_Natalini} by introducing a non-local dependence of $c$ upon $\rho$:
\begin{equation*}
    \partial_t c = -\lambda   (K*\rho) c,
\end{equation*}
where $*$ stands for the convolution with a smooth kernel  $K: \mathbb{R} \rightarrow \mathbb{R}$, i.e.  a $C^{\infty}(\mathbb{R})$ function with compact support or with exponential decay to zero such that $\int_{\mathbb{R}} K(x)dx = 1$.

As a consequence, for any $(t,x)\in [0,T]\times \mathbb R$, equation \eqref{eq:PDE_rho_only} becomes
\begin{equation}\label{eq:Natalini_density}
    \partial_t\rho(t,x) = \Delta\rho(t,x) - \nabla \cdot \left[ b\big(K* \rho(\cdot,x)(t), \nabla K* \rho(\cdot,x)(t)\big) \rho\, \right] -\lambda c_0\exp\big( -\lambda  K*\rho(\cdot,x)(t) \big) \rho(t,x),
\end{equation}
and equation \eqref{eq:PDE_nu_intro} reads as its measure-valued version.

Finally, to the best of our knowledge, pure statistical models \cite{2018_saba}, or deterministic partial differential equation-based models cited above, have been proposed to study the dynamical evolution of degradation phenomena. 
Some stochastic approaches to the degradation modelling have recently been presented \cite{MACH2021_AGMMU,2025_MauMorUgo,2025_Mach2023_MRU}. At the macroscale, an extension of the model \eqref{eq:modello_Natalini} through a coupling with a source of randomness via a stochastic dynamical boundary condition has been provided in \cite{2025_MACH2023_AMMU,2025_MauMorUgo}, making system \eqref{eq:modello_Natalini} a random PDE. The qualitative behaviour of the solutions to such a stochastic boundary value problem has been investigated from a numerical point of view in \cite{2024_ArceciMoraleUgolini_arxiv}, and some rigorous convergence results of some related discrete schemes in a discrete Besov space framework have been established in \cite{2025_ADMU_arxiv}. On the other hand, at a nanoscale, a first stochastic interacting particle model for the dynamics of the calcium carbonate, sulphur dioxide and gypsum molecules has been proposed in \cite{2026_JMMRU,2025_Mach2023_MRU}.

\section{The probabilistic interpretation of the PDE model}\label{sec:prob_interpretation}
By means of a formal application of It\^{o}'s formula, we show that solutions to the Feynman--Kac-type equation \eqref{eq:def_nu_intro} yield measure-valued solutions to the corresponding PDE system \eqref{eq:PDE_nu_intro}.

Throughout the remainder of the paper, we adopt the following notion of smooth mollifier.
\begin{definition}\label{def:kernel}
     A kernel $K: \mathbb{R} \rightarrow \mathbb{R}$ is said a \emph{smooth mollifier} if it satisfies the following properties
\begin{enumerate}[i)]
    \item there exist two constants $M_K,M_K^\prime\in \mathbb R_+$ such that, for any $y\in\mathbb{R}$,  \begin{equation}\label{eq:K_MK}
     |K(y)|\leq M_K, \quad  |\nabla K(y)|\leq M_K^\prime;   
    \end{equation}
    \item there exists a constant $L_K \in \mathbb R_+$  such that, for any $y,y'\in\mathbb{R}$, \begin{equation}\label{eq:K_LK}|K(y')-K(y)|\,\leq\,L_K|y'-y|;
    \end{equation}
    \item it is a probability density; therefore, $\int_{\mathbb{R}}K(y)\,dy =1$.
    \item $K\in L_1$ and there exists a constant $F_K$ such that
    \begin{equation}\label{eq:K_FK}
        \|\mathcal{F}(K)\|_1\,\leq\,F_K,
    \end{equation}
    where $\mathcal{F}(K)$ denotes the Fourier transform of $K$.
\end{enumerate}
\end{definition} 
\begin{remark}
    An example of smooth mollifier which satisfies the assumptions of Definition \ref{def:kernel} is the Gaussian kernel
    \begin{equation*}
        G(x,\sigma):=\frac{1}{\sqrt{2\pi\sigma^2}}\exp\left(-\frac{x^2}{2\sigma^2}\right),\quad \sigma>0.
    \end{equation*}
    Moreover, for every $\sigma>0$, $G(\cdot,\sigma)\in\mathcal{S}(\mathbb{R})$, where $\mathcal{S}(\mathbb{R})$ is the Schwartz space defined in \eqref{eq:def_schwartz_space}.
\end{remark}

We have the following result.  
\begin{theorem}\label{theo:prob_interpretation}
    Let $K$ be a smooth mollifier as in Definition \ref{def:kernel} and let $\{\nu_t\}_{t\in[0,T]}$ be a solution to the Feynman--Kac-type equation
    \begin{equation}\label{eq:def_nu}
        \nu_t(f) = \mathbb{E}\left[f(Y_t)\exp\left(-\lambda c_0\int_0^t\exp\left(-\lambda\int_0^s (K*\nu_r)(Y_s)\,dr\right)ds\right)\right],
    \end{equation}
    for every $t\in[0,T]$ and every $f\in \mathcal{S}(\mathbb{R})$, where the process $Y$ is solution to
    \begin{equation}\label{eq:SDE_con_nu}
        \begin{aligned}
            &Y_t = Y_0 + \int_0^t b\left(\int_0^s (K*\nu_r)(Y_s)dr,\int_0^s (\nabla K*\nu_r)(Y_s)dr\right)ds + \sqrt{2}W_t;\quad t\in[0,T]\\
            &Y_0\sim\rho_0(x)dx.
        \end{aligned}
    \end{equation}
    Then, $\{\nu_t\}_{t\in[0,T]}$ is characterised as the solution of the following measure-valued partial integro-differential equation 
    \begin{equation}\label{eq:PDE_nu}
    \partial_t\nu_t = \Delta\nu_t - \nabla\cdot\left[
    b\left(\int_0^t K*\nu_s\,ds,\int_0^t \nabla K*\nu_s\,ds\right)\nu_t\right]-\lambda c_0
    \exp\left(-\lambda\int_0^t K*\nu_s\,ds\right)\nu_t,
\end{equation}
    with initial condition $\nu_0(dx) = \rho_0(dx)$.
\end{theorem}
\begin{proof}
    For the sake of readability, we introduce the following notation. For every $s\in[0,T]$, let
    \begin{equation*}
        b_s := b\left(\int_0^s (K*\nu_r)(Y_s)dr,\int_0^s (\nabla K*\nu_r)(Y_s)dr\right)
    \end{equation*}
    and
    \begin{equation*}
        E_s := \exp\left(-\lambda c_0\int_0^s\exp\left(-\lambda\int_0^r (K*\nu_u)(Y_r)du\right)dr\right).
    \end{equation*}
    Notice that the process $E$ has finite variation; therefore, the quadratic covariation of $f(Y_s)$ and $E_s$ is zero for every $s\in[0,T]$.
    
    By applying It\^{o}'s lemma to $f(Y_t)E_t$,
    \begin{align*}
        d(f(Y_t)E_t) &= df(Y_s)E_s + f(Y_s)dE_s\\
        &\begin{aligned}
            \,\,=\,&\left(b_s\nabla f(Y_s)E_s + \Delta f(Y_s)E_s\right)ds + \sqrt{2}\nabla f(Y_s)E_sdW_s\\
            &-\lambda c_0 f(Y_s)E_s\exp\left(-\lambda\int_0^s (K*\nu_r)(Y_s)dr\right)ds
        \end{aligned}
    \end{align*}

    Taking the expectation and by applying Tonelli's theorem,
    \begin{equation*}
        \begin{aligned}
            \mathbb{E}[f(Y_t)E_t] = &\mathbb{E}[f(Y_0)E_0] + \int_0^t\mathbb{E}\left[b_s\nabla f(Y_s)E_s + \Delta f(Y_s)E_s\right]ds\\
            &-\lambda c_0\int_0^t\mathbb{E}\left[f(Y_s)E_s\exp\left(-\lambda\int_0^s (K*\nu_r)(Y_s)dr\right)\right]ds
        \end{aligned}
    \end{equation*}

    By equation \eqref{eq:def_nu},
    \begin{equation*}
        \begin{aligned}
            \int_{\mathbb{R}}f(x)\nu_t(dx) = &\int_{\mathbb{R}}f(x)\rho_0(dx) + \int_0^t\int_{\mathbb{R}}b\left(\int_0^s (K*\nu_r)(x)dr,\int_0^s (\nabla K*\nu_r)(x)dr\right)\nabla f(x)\nu_s(dx)\\
            &+\int_0^t\int_{\mathbb{R}}\Delta f(x)\nu_s(dx)ds - \lambda c_0\int_0^t\int_{\mathbb{R}}f(x)\exp\left(-\lambda\int_0^s (K*\nu_r)(x)dr\right)\nu_s(dx)ds,
        \end{aligned}
    \end{equation*}
    which reads as PDE \eqref{eq:PDE_nu} in a measure-valued sense.
\end{proof}

To further detail the probabilistic interpretation given in Theorem \ref{theo:prob_interpretation}, let us give the following definition. 
\begin{definition}\label{def:mollified_density}
  Let $K$ be a smooth mollifier as in Definition \ref{def:kernel} and $\{\nu_t\}_{t\in[0,T]}$ be a Borel measure solution of the measure-valued equation \eqref{eq:PDE_nu}. The function $u_K$ defined, for any $(t,x)\in [0,T]\times \mathbb R$, as  
  \begin{equation}\label{eq:def_mollified_density}
      u_K(t,x)=K*\nu_t(x)
  \end{equation} is called the \emph{mollified density} associated to $\{\nu_t\}_{t\in[0,T]}$.
\end{definition}

Proposition \ref{theo:prob_interpretation} provides a probabilistic interpretation of the regularised version of the deterministic model \eqref{eq:modello_Natalini}. Whenever the density $\{\rho(t,\cdot)\}_{t\in[0,T]}$ satisfies equation \eqref{eq:modello_Natalini}, it may be regarded as the possible density of the measure-valued solution $\{\nu_t\}_{t\in[0,T]}$ of the following  PDE 
\begin{equation}\label{eq:PDE_nu_u_K}
    \partial_t\nu_t  = \Delta\nu_t -\nabla\cdot\left[ b\left(\int_0^t u_K ds,\int_0^t \nabla  u_K ds\right)\nu_t\right] - \lambda c_0\exp\left( -\lambda \int_0^t  u_K ds \right)\nu_t.
\end{equation}

Such equation is driven by the mollified density \eqref{eq:def_mollified_density}, i.e., the averaged non-local function $u_K$, 
\begin{equation}\label{eq:mollified_equation}
    u_K(t,y)= \mathbb E\left[K(y-Y_t) \exp\left(-\lambda\,c_0 \int_0^t \exp\left( -\lambda u_K(\cdot,Y_s)(s)  \right) ds\right) \right],
\end{equation}
discounted by the survival probability according to the specific reaction term along the solution of the McKean--Vlasov SDE
\begin{equation}\label{eq:SDE_u_K}
        Y_t = Y_0 + \int_0^t b\left(u_K(\cdot,Y_s)(s),\nabla  u_K(\cdot,Y_s)(s)\right)ds + \sqrt{2}W_t,
\end{equation}
with the initial condition $u_K(0,x)=K*\rho_0(x)$.
 
In conclusion, at the microscale,  for any  mollifier $K$, the stochastic process $Y$ evolves according to \eqref{eq:SDE_u_K} under the influence of the nonlocal, discounted, and averaged field $u_K$, acting at the macroscale. The unique measure $\nu$  associated with $u_K$, as defined in Proposition \ref{theo:prob_interpretation}, evolves according to the PDE  \eqref{eq:PDE_nu_u_K}. For any $t\in [0,T]$, the measure  $\nu_t$ is a macroscopic observable at time $t$, while $u_K(t,\cdot)$ describes a nonlocal average of this quantity obtained through convolution with $K$, thereby introducing further non-local effects. In particular, $u_K$  depends on the distribution of the measure over the entire domain, rather than solely on the local behaviour around any point $x\in \mathbb R$. At the microscale, this macroscopic field governs the dynamics of a typical random particle given by $Y$. This interpretation is made rigorous through the propagation of chaos result established for the associated particle system (see Section \ref{sec:propagation_chaos}). Hence, we can regard the pair $(K,m)$ at the microscale as equivalent to the corresponding  macroscopic pairs $(K,u_K)$ and $(K,\nu)$ which represent, given $K$, the density of the chemical with and without nonlocal effects, respectively.

The coupled system \eqref{eq:SDE_u_K},\eqref{eq:mollified_equation} will be referred to as the \emph{McKean--Feynman--Kac SDE (MKFK-SDE)}. We now turn to the analysis of its well-posedness. We begin by establishing well-posedness results for the associated Feynman--Kac-type equation in the more general framework of an arbitrary measure, not necessarily corresponding to the law of a solution of \eqref{eq:SDE_u_K}.

\section{The Feynman--Kac-type equation: study of the well-posedness } \label{sect:well_posedness_reg_Feynman-Kac}
Let $C=C\left( [0,T];\mathbb{R}\right)$ and $C^+=C\left( [0,T];\mathbb{R}^{+} \right)$ be the space of the real and real positive continuous functions from $[0,T]$, respectively. Furhermore, let $\mathcal{P}\left(C\right)$ be  the space of probability measures  on $C$. For any $m\in\mathcal{P}\left(C\right)$, we denote by    $X^m$ the associated canonical process.

For every $t\in[0,T]$ and $z\in C^+$, we consider the following notation for the integral function
\begin{equation}\label{def:integral_function}
    z(t)=\int_0^t z_s ds,
\end{equation}
and we introduce the following auxiliary functions
\begin{eqnarray}\label{eq:Lambda_Feynman--Kac}
    \Lambda_t(z)&:=&\Lambda(t,z(t)):=-\lambda\,c_0\exp\left( -\lambda \int_0^t z_s\,ds \right)=  -\lambda\,c_0\exp\left( -\lambda z(t) \right)\,,\\\label{eq:V_in_FK}
    V_t\left(z  \right)   &:=&   \exp\left(  \int_0^t\Lambda\left(s,z(s)\right) ds\right).
\end{eqnarray}
While function $V_t$ takes into account the presence of a discount term, function $\Lambda_t$  takes into account the dependence upon the past history.

Now, given a smooth mollifier $K$, we write the regularised Feynman--Kac-type equation \eqref{eq:mollified_equation} for a generic measure over path space $m\in\mathcal{P}\left(C\right)$. We use the superscript $m$ to emphasize that the construction is performed with respect to a generic measure on path space, not necessarily corresponding to the law of the process $Y$ solving the SDE \eqref{eq:SDE_u_K}. This is
\begin{equation}\label{eq:Feynman_Kac_generic_measure}
    \begin{split}
        u^m_K(t,y) &= \int_{C} K(y - X_t^m(\omega))\,V_t\left( u^m_K\left(\cdot,X^m_\cdot(\omega)\right)\right)m(d\omega)\\
        &=\int_{C} K(y - X_t^m(\omega))\,\exp\left(  \int_0^t\Lambda\Big(s,u^m_K\left(\cdot,X^m_s(\omega)\right)(s)\Big) ds \right)m(d\omega),
    \end{split}
\end{equation} 
where  $\Lambda$ and $V_t$ are given by \eqref{eq:Lambda_Feynman--Kac} and \eqref{eq:V_in_FK}, respectively.

Before proving that equation \eqref{eq:Feynman_Kac_generic_measure} admits a unique solution, we show some continuity properties for the terms $\Lambda$ and $V$.
\begin{lemma}\label{Lemma:Lambda}
    For any $t\in[0,T]$ and $z,z^\prime \in C^+$, the function $\Lambda$ defined in \eqref{eq:Lambda_Feynman--Kac} is such that
    \begin{enumerate}[i)]
        \item $  \big|\Lambda(t,z(t))\big|\le \lambda c_0$, i.e.  it is uniformly bounded;
        \item $\big| \Lambda(t,z(t)) - \Lambda(t,z^\prime(t)) \big| \,\leq\, \lambda^2\,c_0|z(t) - z^\prime(t)| \,\leq\, \lambda^2\,c_0\int_0^t|z_s - z^\prime_s|\,ds$.
    \end{enumerate}
\end{lemma}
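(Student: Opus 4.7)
The plan is to read both bounds directly off the explicit form $\Lambda(t,z(t)) = -\lambda c_0 \exp(-\lambda z(t))$ with $z(t) = \int_0^t z_s\, ds$. The key structural observation is that since $z \in \mathcal{C}^+$ takes values in $\mathbb{R}^+$, its integral function satisfies $z(t) \geq 0$ for every $t \in [0,T]$, so the exponential factor lies in $(0,1]$. Claim (i) then follows immediately: $|\Lambda(t,z(t))| = \lambda c_0 \exp(-\lambda z(t)) \leq \lambda c_0$.

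For claim (ii), I would apply the mean value theorem to the scalar function $f(x) = \exp(-\lambda x)$ restricted to $[0,\infty)$. Since $|f'(x)| = \lambda \exp(-\lambda x) \leq \lambda$ on this half-line, one gets the Lipschitz estimate
\[
|\exp(-\lambda z(t)) - \exp(-\lambda z'(t))| \leq \lambda\, |z(t) - z'(t)|,
\]
and multiplying by $\lambda c_0$ yields the first inequality in (ii). The second inequality is just the triangle inequality applied to the Lebesgue integral:
\[
|z(t) - z'(t)| = \Big|\int_0^t (z_s - z'_s)\, ds \Big| \leq \int_0^t |z_s - z'_s|\, ds.
\]

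There is no genuine obstacle here; the statement is an elementary calculus fact once one has written $\Lambda$ in explicit form. The only point that must be flagged is that both bounds rely on the sign condition $z(t) \geq 0$, which is guaranteed precisely because the argument functions are taken in $\mathcal{C}^+$ and not in $\mathcal{C}$; without this, the exponential factor would no longer be bounded by $1$ and the Lipschitz constant of $f$ would blow up.
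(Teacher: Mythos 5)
Your proposal is correct and follows essentially the same route as the paper: the paper also reads claim (i) off the positivity of $z(t)=\int_0^t z_s\,ds$ for $z\in\mathcal{C}^+$, and for claim (ii) it uses the integral identity $e^b-e^a=(b-a)\int_0^1 e^{\alpha b+(1-\alpha)a}\,d\alpha\leq |b-a|\,e^{a\vee b}$ with $a,b\leq 0$, which is just the integral-form version of your mean value theorem argument with the same Lipschitz constant $\lambda$ on the non-negative half-line. No substantive difference.
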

\begin{proof}
   Recalling the notation \eqref{def:integral_function} for integral functions, since $z\in C^+$ and $\lambda\in \mathbb R_+$, from the fact that $\exp\left( -\lambda z(t) \right)=\exp\left( -\lambda \int_0^t z_s\,ds \right)\leq 1$, it derives that
    \begin{equation*}
        \big| \Lambda(t,z(t)) \big| = \lambda\,c_0\,\exp\left( -\lambda \int_0^t z_s\,ds \right) \,\leq\, \lambda\,c_0\,.
    \end{equation*}
    As the   point ii) concerns, let us notice that for any $a,b\in\mathbb{R}$,
    \begin{equation}\label{ineq_aux_exp}
        e^b - e^a = (b-a)\int_0^1 e^{bx + (1-x)a}\,dx \,\leq\,|a - b|  e^{a \vee b  }.
    \end{equation}
    By taking $a:= -\lambda z(t), b:= -\lambda z^\prime(t)$ so that $a,b <0$ and $ e^{a \vee b  }\leq 1$, from  \eqref{eq:Lambda_Feynman--Kac} and \eqref{ineq_aux_exp},
    \begin{align*}
        \big| \Lambda(t,z(t)) - \Lambda(t,z^\prime(t)) \big| \,&=\, \lambda\,c_0\Bigg| \exp\left( -\lambda \int_0^t z_s\,ds \right) - \exp\left( -\lambda \int_0^t z^\prime_s\,ds \right) \Bigg|&&\\
        &\leq\,\lambda^2c_0 \Bigg| \int_0^t z_s\,ds - \int_0^t z^\prime_s\,ds \Bigg|\,\leq\,\lambda^2\,c_0\int_0^t |z_s - z^\prime_s|\,ds.&&
    \end{align*}
\end{proof}

 As a consequence of  Lemma \ref{Lemma:Lambda}, function $V_t$ in \eqref{eq:V_in_FK} shares similar properties with function $\Lambda$.
\begin{lemma}\label{Lemma:V_continuity}
    For every $t\in[0,T]$ and $z,z^\prime\in C^+$, the function $V_t$  defined in \eqref{eq:V_in_FK}  is such that
    \begin{enumerate}[i)]
        \item $0\le V_t(z)\le 1$;
        \item  $
        \big| V_t(z) - V_t(z^\prime) \big| \,\leq\, \lambda^2\,c_0\int_0^t|z(s) - z^\prime(s)|\,ds \,\leq\, \lambda^2\,c_0\int_0^t\int_0^s|z_r - z^\prime_r|\,drds.
    $
    \end{enumerate}
\end{lemma}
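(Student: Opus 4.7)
The plan is to deduce both items of Lemma \ref{Lemma:V_continuity} directly from Lemma \ref{Lemma:Lambda} together with the elementary inequality \eqref{ineq_aux_exp} that was already used in its proof.

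For item i), the key observation is that $\Lambda(s,z(s)) = -\lambda c_0 \exp(-\lambda z(s))$ is non-positive for every $s\in[0,T]$ and every $z\in\mathcal{C}^+$, since $\lambda,c_0\in\mathbb{R}_+$ and the exponential is positive. Consequently $\int_0^t \Lambda(s,z(s))\,ds \le 0$, which immediately yields $0 < V_t(z) = \exp(\int_0^t \Lambda(s,z(s))\,ds) \le 1$.

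For item ii), I would set $a := \int_0^t \Lambda(s,z'(s))\,ds$ and $b := \int_0^t \Lambda(s,z(s))\,ds$. Both are non-positive by the argument above, so $e^{a\vee b}\le 1$, and applying \eqref{ineq_aux_exp} gives
\begin{equation*}
|V_t(z) - V_t(z')| \le \left|\int_0^t \Lambda(s,z(s))\,ds - \int_0^t \Lambda(s,z'(s))\,ds\right| \le \int_0^t \bigl|\Lambda(s,z(s)) - \Lambda(s,z'(s))\bigr|\,ds.
\end{equation*}
Invoking Lemma \ref{Lemma:Lambda} ii) inside the integrand yields $|\Lambda(s,z(s)) - \Lambda(s,z'(s))| \le \lambda^2 c_0 |z(s) - z'(s)|$, giving the first inequality of ii). The second inequality then follows from the definition \eqref{def:integral_function} of the integral function and the triangle inequality, namely $|z(s) - z'(s)| \le \int_0^s |z_r - z'_r|\,dr$.

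Neither step presents a genuine obstacle: the whole argument is a routine combination of the monotonicity of $\Lambda$ in sign, the mean-value-type bound \eqref{ineq_aux_exp}, and the Lipschitz estimate already established for $\Lambda$. The only point requiring a moment of care is keeping the two levels of integration (the outer $ds$ from $V_t$ and the inner $dr$ hidden in $z(s)$) straight, which is exactly why the statement records both forms of the bound.
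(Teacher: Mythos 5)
Your proposal is correct and follows essentially the same route as the paper: positivity of the exponential and non-positivity of $\int_0^t\Lambda(s,z(s))\,ds$ for item i), and the mean-value bound \eqref{ineq_aux_exp} combined with Lemma \ref{Lemma:Lambda} ii) for item ii). If anything, your identification of $a$ and $b$ as the \emph{integrals} of $\Lambda$ (rather than $\Lambda$ itself) is stated more carefully than in the paper's own proof, and your argument for i) avoids the paper's unnecessary detour through the bound $\overline{M}$ on $z$.
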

\begin{proof}
    Since $z\in C^+$, then it is bounded in $[0,T]$. Let $\overline{M}$ be its maximum. For any $t\in[0,T]$ and $z\in C^+$, since
    \begin{equation*}
        \exp\left( -\lambda \int_0^t z(s)\,ds \right) \,\geq\,  e^{-\lambda\,\overline{M}\,T} \,=:\,C(T)\,>\, 0,
    \end{equation*}
   we get that
    \begin{equation*}
        0 < V_t(z)   = \exp\left(-\lambda\,c_0 \int_0^t \exp\left( -\lambda \int_0^s z_r\, dr \right) ds\right) \,\leq\, e^{-\lambda\,c_0\,C(T)\,t} \,\leq\, 1.
    \end{equation*} 
By applying inequality \eqref{ineq_aux_exp} with $a\,:=\Lambda(t,z(t))<0$ and $b\,:=\Lambda(t,z^\prime(t))<0$, by ii) of Lemma \ref{Lemma:Lambda}, we conclude. Indeed, 
\begin{eqnarray*}
   \left| V_t(z) - V_t(z^\prime) \right| &=& \left| \int_0^t \Lambda(s,z(s)) ds - \int_0^t\Lambda(s,z^\prime(s))ds \right| \leq   \int_0^t\left| \Lambda(s,z(s))   -  \Lambda(s,z^\prime(s)) \right| ds \\&\leq\,&\lambda^2\,c_0\int_0^t\int_0^s|z_r - z^\prime_r|\,dr\,ds.
 \end{eqnarray*}
\end{proof}

We are now ready to prove the result of well-posedness for equation \eqref{eq:Feynman_Kac_generic_measure}.

\subsection{Existence and  uniqueness}
To start, we consider possible norms on the space of real-valued continuous processes $Z=\left(Z_t\right)_{t\in [0,T]}$ defined upon the canonical space $C=C\left( [0,T];\mathbb{R}\right)$. 
\begin{definition} \label{definition_c1}
    Fix $m\in\mathcal{P}\left(C\right)$. Let $C_1=\left(C_1,\| \cdot\|_{\infty,1}\right)$ be the Banach space of real-valued continuous processes  $Z=\{Z_t\}_{t\in [0,T]}$, endowed with the norm
    \begin{equation}\label{eq:norm_infty_1}
        \lVert Z \rVert_{\infty,1} := \mathbb{E}_m\left[ \sup_{t\leq T} |Z_t| \right] =  \int_{C} \sup_{t \leq T} |Z_t(\omega)| m(d\omega) < \infty.
    \end{equation}
\end{definition}
On the space $C_1$, one may introduce an equivalent norm that takes into account a possible  discount term. 
\begin{lemma}\label{lemma:norm_killing_equivalent}
    For any $M\geq 0$ and $Z\in C_1$, let us define the norm
    \begin{equation}\label{eq:norm_infty_1_M}
        \lVert Z \rVert_{\infty,1}^M \,:=\, \mathbb{E}_m\left[ \sup_{t\leq T} e^{-Mt} |Z_t| \right].
    \end{equation}
    Then, for any $M\geq 0$, the norms  $\lVert \cdot \rVert_{\infty,1}$ and  $\lVert \cdot \rVert_{\infty,1}^M$ are equivalent. 
    Furthermore, for any $M\geq0$, the space $C_{1,M}=\left(C,\lVert \cdot \rVert_{\infty,1}^M\right)$ is a Banach space.
\end{lemma}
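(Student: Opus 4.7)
The plan is to reduce everything to a pointwise comparison between the weights $1$ and $e^{-Mt}$, and then let the equivalence of norms carry completeness over from the Banach space $(\mathcal{C}_1,\|\cdot\|_{\infty,1})$ of Definition \ref{definition_c1}.

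First, I would fix $M\geq 0$ and note that, since $t\in[0,T]$ and $M\geq 0$, we have the elementary bounds $e^{-MT}\leq e^{-Mt}\leq 1$. Multiplying by $|Z_t|\geq 0$ and taking the supremum over $t\in[0,T]$ yields, pathwise on $\mathcal{C}$,
\begin{equation*}
 e^{-MT}\sup_{t\leq T}|Z_t|\;\leq\;\sup_{t\leq T}e^{-Mt}|Z_t|\;\leq\;\sup_{t\leq T}|Z_t|.
\end{equation*}
Integrating against $m\in\mathcal{P}(\mathcal{C})$ and recalling \eqref{eq:norm_infty_1} and \eqref{eq:norm_infty_1_M}, I obtain
\begin{equation*}
 e^{-MT}\,\|Z\|_{\infty,1}\;\leq\;\|Z\|_{\infty,1}^{M}\;\leq\;\|Z\|_{\infty,1},
\end{equation*}
which, since $e^{-MT}>0$, is precisely the equivalence statement with explicit constants.

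For the Banach property of $\mathcal{C}_{1,M}$, I would exploit that equivalent norms share the same Cauchy and convergent sequences. Given $(Z_n)_n$ Cauchy with respect to $\|\cdot\|_{\infty,1}^{M}$, the inequality $\|\cdot\|_{\infty,1}\leq e^{MT}\|\cdot\|_{\infty,1}^{M}$ makes it Cauchy also with respect to $\|\cdot\|_{\infty,1}$; by the completeness asserted in Definition \ref{definition_c1}, it converges to some $Z\in\mathcal{C}_1$ in the $\|\cdot\|_{\infty,1}$ sense, and the reverse inequality $\|\cdot\|_{\infty,1}^{M}\leq\|\cdot\|_{\infty,1}$ transfers this convergence to $\|\cdot\|_{\infty,1}^{M}$.

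There is no real obstacle here; the only point to underline is that a finite horizon $T<\infty$ together with $M\geq 0$ is exactly what keeps $e^{-MT}$ strictly positive and the equivalence nontrivial. The true purpose of the lemma is downstream: the weight $e^{-Mt}$ is a Bielecki-type device that will allow me to absorb time-integral operators into a contraction for the fixed-point argument on \eqref{eq:Feynman_Kac_V_Lambda}, while working on the same underlying space as $\mathcal{C}_1$.
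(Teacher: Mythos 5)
Your proof is correct and follows essentially the same route as the paper: a pointwise comparison of the weights $e^{-Mt}$ with $1$ and $e^{-MT}$ yields the two-sided bound $e^{-MT}\,\lVert Z\rVert_{\infty,1}\leq \lVert Z\rVert_{\infty,1}^{M}\leq \lVert Z\rVert_{\infty,1}$ (the paper writes the lower bound as $\lVert Z\rVert_{\infty,1}\leq e^{MT}\lVert Z\rVert_{\infty,1}^{M}$ by inserting $e^{Mt}e^{-Mt}$, which is the same estimate), and completeness is then transferred from $(\mathcal{C}_1,\lVert\cdot\rVert_{\infty,1})$ by the standard equivalent-norms argument, exactly as the paper asserts. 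The only cosmetic difference is that the paper explicitly remarks that one must also check $\lVert\cdot\rVert_{\infty,1}^{M}$ is a norm, which you take for granted.
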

\begin{proof}
    On the one hand,
    \begin{equation*}
         \lVert Z \rVert_{\infty,1}^M = \mathbb{E}_m\left[ \sup_{t\leq T} e^{-Mt}\,\big|Z_t\big| \right] \,\leq\, \left(\sup_{t\leq T} e^{-Mt}\right)\mathbb{E}_m\left[ \sup_{t\leq T} \big|Z_t\big| \right] = \mathbb{E}_m\left[ \sup_{t\leq T} \big|Z_t\big| \right] = \lVert Z \rVert_{\infty,1}\,.
    \end{equation*}
    On the other hand,
    \begin{align*}
        \lVert Z \rVert_{\infty,1} \,&=\, \mathbb{E}_m\left[ \sup_{t\leq T} \big| Z_t \big| \right] = \mathbb{E}_m\left[ \sup_{t\leq T} \Big\{ e^{Mt}e^{-Mt}\big|Z_t\big| \Big\} \right] \,\leq&&\\
        &\leq\, \left(\sup_{t\leq T}e^{Mt}\right)\mathbb{E}_m\left[ \sup_{t\leq T} \Big\{ e^{-Mt}\big|Z_t\big| \Big\} \right]=e^{MT}\,\lVert Z \rVert_{\infty,1}^M.
    \end{align*}
    Therefore,
     the norms $\lVert \cdot \rVert_{\infty,1}$ and $\lVert \cdot \rVert_{\infty,1}^M$ are equivalent. Since one can easily prove that $\lVert \cdot \rVert_{\infty,1}^M$ is a norm,  as $\left(C_1,\lVert \cdot \rVert_{\infty,1}\right)$ is a Banach space, the last statement follows.
\end{proof}
We introduce a second norm that takes into account not only the discount term but also the time integral operator responsible for the non-Markovianity property. In particular, this is the main point distinguishing our well-posedness argument for the regularised Feynman--Kac-type equation with respect to a generic measure \eqref{eq:Feynman_Kac_generic_measure} from that of \cite{2016_Russo}.

\begin{lemma}\label{lemma_equiv_norme_Natalini}
    For any $M\geq 0$, $m\in\mathcal{P}\left(C\right)$, and $Z\in C_1$, we define
    \begin{equation*}
        \lVert Z \rVert_{\infty,1,1}^M \,:=\, \mathbb{E}_m\left[ \sup_{t\leq T} \Bigg\{ e^{-Mt} \int_0^t |Z_s|\,ds \Bigg\} \right] = \int_{C} \sup_{t\leq T} \Bigg\{ e^{-Mt} \int_0^t |Z_s(\omega)|\,ds \Bigg\} m(d\omega).
    \end{equation*}
     Then, $\lVert \cdot \rVert_{\infty,1,1}^M$ is a norm and it holds that there exists a positive constant $C_T$, depending only on $T$ and $M$, such that
    \begin{equation}\label{chain_of_inequalities}
        \lVert Z \rVert_{\infty,1,1}^M \,\leq\, T\,\lVert Z \rVert_{\infty,1} \,\leq\, C_T\,\lVert Z \rVert_{\infty,1}^M\,.
    \end{equation} 
\end{lemma}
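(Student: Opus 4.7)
The plan is to verify the three ingredients separately: that $\lVert\cdot\rVert_{\infty,1,1}^M$ is a norm, the bound $\lVert Z \rVert_{\infty,1,1}^M \leq T\lVert Z\rVert_{\infty,1}$, and finally the bound $T\lVert Z\rVert_{\infty,1}\leq C_T\lVert Z\rVert_{\infty,1}^M$.

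For the norm axioms, absolute homogeneity is immediate since $|\alpha Z_s|=|\alpha|\,|Z_s|$ factors out of the integral, sup and $\mathbb{E}_m$. Subadditivity follows because $\int_0^t|Z_s+Z'_s|\,ds \leq \int_0^t|Z_s|\,ds+\int_0^t|Z'_s|\,ds$, and then $\sup$ and $\mathbb{E}_m$ are both subadditive (the weight $e^{-Mt}$ is non-negative, so it preserves the inequality pointwise). For positive definiteness I would use that if $\lVert Z\rVert_{\infty,1,1}^M=0$ then $m$-a.s. $\sup_{t\leq T}\{e^{-Mt}\int_0^t|Z_s|\,ds\}=0$, so $\int_0^t |Z_s(\omega)|\,ds=0$ for all $t\in[0,T]$; since $s\mapsto Z_s(\omega)$ is continuous, this forces $Z\equiv 0$ $m$-a.s., i.e.\ $Z=0$ in $\mathcal{C}_1$.

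For the first inequality in \eqref{chain_of_inequalities}, I would use the pointwise estimate $e^{-Mt}\leq 1$ together with the trivial $|Z_s|\leq \sup_{r\leq T}|Z_r|$ to get
\begin{equation*}
e^{-Mt}\int_0^t |Z_s|\,ds \;\leq\; t\,\sup_{r\leq T}|Z_r|\;\leq\; T\,\sup_{r\leq T}|Z_r|,
\end{equation*}
and then take $\sup_{t\leq T}$ and $\mathbb{E}_m$ on both sides.

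For the second inequality, I would simply invoke Lemma~\ref{lemma:norm_killing_equivalent}, which already gives $\lVert Z\rVert_{\infty,1}\leq e^{MT}\lVert Z\rVert_{\infty,1}^M$; multiplying by $T$ and setting $C_T:=T e^{MT}$ finishes the chain. No step here is a genuine obstacle; the only mild care is in the positive-definiteness argument, where one must use path continuity to pass from vanishing integrals to vanishing of $Z$ itself.
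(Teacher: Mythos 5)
Your proposal is correct and follows essentially the same route as the paper: the first inequality via $e^{-Mt}\le 1$ and $\int_0^t|Z_s|\,ds\le t\sup_{r\le T}|Z_r|$, and the second via $\lVert Z\rVert_{\infty,1}\le e^{MT}\lVert Z\rVert_{\infty,1}^M$ with $C_T=Te^{MT}$ (the paper re-derives this bound inline, which is verbatim the computation of Lemma~\ref{lemma:norm_killing_equivalent} that you instead cite). You also spell out the norm axioms, including the path-continuity step for positive definiteness, which the paper leaves as "easy to show."
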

\begin{proof}
    Since it is easy to show that $\lVert \cdot \rVert_{\infty,1,1}^M$ is a norm, we only prove the inequalities \eqref{chain_of_inequalities}. On the one hand,
    \begin{align*}
        \lVert Z \rVert_{\infty,1,1}^M \,&=\, \mathbb{E}_m\left[ \sup_{t\leq T} \Bigg\{ e^{-Mt} \int_0^t |Z_s|\,ds \Bigg\} \right]\,\leq\,\mathbb{E}_m\left[ \sup_{t\leq T} \Big\{ e^{-Mt}\cdot t\cdot\sup_{s\leq t} |Z_s| \Big\} \right]&&\\
        &=\, \sup_{t\leq T}e^{-Mt}\,\cdot\,\sup_{t\leq T}t\,\cdot\,\mathbb{E}_m\left[ \sup_{t\leq T} \Big\{ \sup_{s\leq t} |Z_s| \Big\} \right]=T\,\cdot\,\mathbb{E}_m\left[ \sup_{t\leq T} |Z_t| \right]=\,T\,\cdot\,\lVert Z \rVert_{\infty,1}.
    \end{align*}
    On the other hand,
    \begin{align*}
        \lVert Z \rVert_{\infty,1} \,&=\, \mathbb{E}_m\left[ \sup_{t\leq T} \big| Z_t \big| \right] = \mathbb{E}_m\left[ \sup_{t\leq T} \Big\{ e^{Mt}e^{-Mt}\big|Z_t\big| \Big\} \right]&&\\
        &\leq\, \left(\sup_{t\leq T}e^{Mt}\right)\mathbb{E}_m\left[ \sup_{t\leq T} \Big\{ e^{-Mt}\big|Z_t\big| \Big\} \right]=\, e^{MT}\,\lVert Z \rVert^M_{\infty,1}.
    \end{align*}
    Therefore, we get \eqref{chain_of_inequalities} with $C_T:=Te^{MT}$.
\end{proof}

We can finally state and prove the existence and uniqueness result for a solution to equation \eqref{eq:Feynman_Kac_generic_measure}.
\begin{theorem}
For any $m\in\mathcal{P}\left(C\right)$ and its  associated canonical process  $X^m$,  equation \eqref{eq:Feynman_Kac_generic_measure} admits a unique solution $u^m_K \in C^+$.
\end{theorem}
\begin{proof}
Let us fix a measure $m\in \mathcal{P}(C)$ and let $X:=X^m$ be the associated canonical process. The existence and uniqueness of the function $u^m_K$ is proved by means of a fixed point argument. Indeed, we may rewrite  \eqref{eq:Feynman_Kac_generic_measure} in the the Banach space $\left(C_1^+,\lVert \cdot \rVert_{\infty,1}^M\right)$ as 
\begin{equation}\label{eq:linking_fixed_point}
        u^m_K = \left(    T^m \circ\tau\right)\left(u^m_K\right).
    \end{equation}   
    The function $
        T^m : C_1^+ \rightarrow C\left( [0,T]\times\mathbb{R},\mathbb{R}^+ \right)$ maps $Z\in C_1^+$ to $T^m(Z)$ such that, for any $(t,y)\in [0,T]\times \mathbb R$,
    \begin{equation*}
        T^m(Z)(t,y) \,:=\, \int_{C} K\left( y - X_t(\omega) \right)V_t\left(\cdot, Z_\cdot(\omega) \right)m(d\omega),
    \end{equation*}
    and $ \tau : C\left( [0,T]\times\mathbb{R},\mathbb{R}^+ \right) \rightarrow C_1^+$  is
    such that, for any $f\in C\left( [0,T]\times\mathbb{R},\mathbb{R}^+ \right)$, $\tau(f)$ is defined as
    \begin{equation*}
         [\tau(f)]_t(\omega) \,:=\, f(t,\omega_t)
    \end{equation*}
    for every $t\in[0,T]$.
    
    We now show that $F=\tau \circ T^m$ admits a unique fixed point $Z\in C_1^+$. Given $Z,Z^\prime\in C_1^+$ and $(t,y)\in[0,T]\times\mathbb{R}$, by Lemma \ref{Lemma:V_continuity} and Definition \ref{def:kernel},
    \begin{align*}
        &|T^m(Z^\prime) - T^m(Z)|
       = \Big| \int_{C} K\left(y - X_t(\omega)\right) \left( V_t\left(\cdot, Z_\cdot^\prime(\omega) \right) - V_t\left( \cdot,Z_\cdot(\omega) \right) \right)\,m(d\omega) \Big|&&\\
        &\leq\, M_K\,\lambda^2\,c_0 \int_{C} \int_0^t\int_0^s |Z_{r}'(\omega) - Z_{r}(\omega)|\,dr\,ds\,m(d\omega)&&\\
        &\leq\, M_K\,\lambda^2\,c_0\,\mathbb{E}_m \left[  \sup_{s\leq t} \Bigg\{ e^{-M s} \int_0^s |Z_{r}' - Z_{r}|\,dr \Bigg\} \left(\int_0^t e^{M s}\,ds\right) \right]&&\\
        &=\, M_K\,\lambda^2\,c_0 \left(\int_0^t e^{M s}\,ds\right)\,\mathbb{E}_m \left[  \sup_{s\leq t} \Bigg\{ e^{-M s} \int_0^s |Z_{r}' - Z_{r}|\,dr \Bigg\} \right]&&\\
        &=\, M_K\,\lambda^2\,c_0\, \frac{e^{Mt} - 1}{M} \lVert Z^\prime - Z \rVert_{\infty,1,1}^M.
    \end{align*}
   
    Therefore, for $F:=\tau\circ T^m$, we obtain
    \begin{eqnarray*}
         \sup_{t\leq T}\left[e^{-Mt}\big| F(Z^\prime)_t - F(Z)_t \big|\right] \,&=&  \sup_{t\leq T}e^{-Mt}\big| T^m(Z^\prime)(t,X_t) - T^m(Z)(t,X_t) \big| \,\\
        &\leq\,& M_K\,\lambda^2\,c_0 \lVert Z^\prime - Z \rVert_{\infty,1,1}^M\,\sup_{t\leq T}\left(e^{-Mt}\,\frac{e^{Mt} - 1}{M}\right)\,\\
        &=&\, M_K\,\lambda^2\,c_0\, \frac{1}{M} \lVert Z^\prime - Z \rVert_{\infty,1,1}^M.
    \end{eqnarray*}
      
    Taking the expectation, by \eqref{chain_of_inequalities}, for $M>M_K\,\lambda^2\,c_0\,C_T$,  we get
    \begin{eqnarray*}
       \lVert \left( \tau \circ T^m \right)(Z^\prime)_t - \left( \tau \circ T^m \right)(Z)_t \rVert_{\infty,1}^M \,&\leq& \frac{M_K\,\lambda^2\,c_0}{M} \lVert Z^\prime - Z \rVert_{\infty,1,1}^M \,\\&\leq\, &\frac{M_K\,\lambda^2\,c_0\,C_T}{M} \lVert Z^\prime - Z \rVert_{\infty,1}^M\,\\&\leq\, &  \lVert Z^\prime - Z \rVert_{\infty,1}^M.
    \end{eqnarray*}
    
    Hence, we have that $\tau \circ T^m$ is a contraction on  the Banach space $\left(C_1,\lVert \cdot \rVert_{\infty,1}^M\right)$. Applying the Banach fixed point theorem, $\tau \circ T^m$ admits a unique fixed point $Z\in C_1^+$. Since 
    $$T^m(Z)=T^m(\tau\circ T^m(Z))=T^m\circ \tau (T^m(Z)),$$
    it is easy to prove that $T^m(Z)$ is a solution of \eqref{eq:linking_fixed_point}. This proves the existence of a solution to \eqref{eq:Feynman_Kac_generic_measure}. As for its uniqueness, it follows as in \cite[Theorem 3.1]{2016_Russo}.
\end{proof}

To conclude the analysis of the Feynman--Kac-type equation \eqref{eq:Feynman_Kac_generic_measure}, some regularity properties of its solution are derived.

\subsection{Regularity, stability and non-anticipating properties}\label{sec:stability_um}
The following boundedness and continuity properties hold.
\begin{proposition}
    Let $u:=u^m_K$ be a solution to \eqref{eq:Feynman_Kac_generic_measure}. Then, for any $(t,y)\in  [0,T]\times\mathbb{R}$, $u$ is bounded,
    $$|u|\le M_K,$$
    and Lipschitz continuous with respect to the second variable,
    $$|u(t,y) - u(t,y')|\le L_K |y - y^\prime|,$$
    where $M_K$ and $L_K$ are as in \eqref{eq:K_MK} and \eqref{eq:K_LK}, respectively.
\end{proposition}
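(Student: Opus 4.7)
The proof should be essentially immediate from the integral representation \eqref{eq:Feynman_Kac_V_Lambda} once the two factors in the integrand are controlled separately: the kernel $K$ carries all the $y$-dependence, while the Feynman–Ka\v{c} weight $V_t(u^m(\cdot,X^m_\cdot(\omega)))$ is $y$-independent and lies in $[0,1]$ by Lemma \ref{Lemma:V_continuity}(i). So the plan is simply to insert absolute values under the integral in \eqref{eq:Feynman_Kac_V_Lambda} and bound each factor using the assumptions on $K$ in Definition \ref{def:kernel} together with Lemma \ref{Lemma:V_continuity}(i).

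For the boundedness claim, I would write
\begin{equation*}
|u^m(t,y)| \,\leq\, \int_{\mathcal{C}} |K(y - X^m_t(\omega))|\, V_t\!\left(u^m(\cdot,X^m_\cdot(\omega))\right)\,m(d\omega) \,\leq\, M_K \cdot 1 \cdot m(\mathcal{C}) \,=\, M_K,
\end{equation*}
invoking \eqref{eq:K_MK} for the kernel bound, Lemma \ref{Lemma:V_continuity}(i) for $V_t\leq 1$, and the fact that $m\in\mathcal{P}(\mathcal{C})$ is a probability measure.

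For the Lipschitz property in $y$, the key observation is that, at fixed $\omega$, the weight $V_t(u^m(\cdot,X^m_\cdot(\omega)))$ does not depend on $y$, so all the $y$-dependence is concentrated in $K$. Taking the difference
\begin{equation*}
u^m(t,y) - u^m(t,y') \,=\, \int_{\mathcal{C}} \bigl[K(y - X^m_t(\omega)) - K(y' - X^m_t(\omega))\bigr]\, V_t\!\left(u^m(\cdot,X^m_\cdot(\omega))\right)\, m(d\omega),
\end{equation*}
taking absolute values, and applying \eqref{eq:K_LK} together with $V_t\leq 1$ yields
\begin{equation*}
|u^m(t,y) - u^m(t,y')| \,\leq\, \int_{\mathcal{C}} L_K |y - y'|\cdot 1 \cdot m(d\omega) \,=\, L_K |y - y'|.
\end{equation*}

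There is no real obstacle here; both claims fall out pointwise in $t$ from the $y$-independence of $V_t$ and the separate control of $K$ and $V_t$. The only thing worth double-checking is that $u^m(\cdot,X^m_\cdot(\omega))\in\mathcal{C}^+$ for $m$-a.e.\ $\omega$, so that Lemma \ref{Lemma:V_continuity}(i) applies; this follows from Proposition \ref{prop:existence_u^m}, which already produces $u^m$ with values in $\mathbb{R}^+$ and continuous in time, hence the composition with the continuous path $X^m_\cdot(\omega)$ lies in $\mathcal{C}^+$.
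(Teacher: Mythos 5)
Your proof is correct and follows essentially the same route as the paper: bound $V_t$ by $1$ via Lemma \ref{Lemma:V_continuity}(i), use \eqref{eq:K_MK} for boundedness and \eqref{eq:K_LK} for the Lipschitz estimate, and integrate against the probability measure $m$. No further comment is needed.
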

\begin{proof}
   Boundedness follows directly from Lemma \ref{Lemma:V_continuity} and  \eqref{eq:K_MK}. Lipschitz continuity follows from Lemma \ref{Lemma:V_continuity}, \eqref{eq:K_LK}, and the fact that $m$ is a probability measure.  Indeed,  for every $t\in[0,T]$ and $y,y'\in\mathbb{R}$, we have that
    \begin{eqnarray*}
        |u(t,y) - u(t,y')| 
        &\leq&\,\int_{C } |K(y - X_t(\omega)) - K(y' - X_t(\omega))|\,|V_t\left( u(\cdot,X_\cdot(\omega)) \right)|m(d\omega) \\
        &\leq\,& \int_{C } |K(y - X_t(\omega)) - K(y' - X_t(\omega))|m(d\omega)\leq\,\,L_K\,|y - y'|.
    \end{eqnarray*}
\end{proof}

The next results regard some properties of continuity of $u^m_K$ with respect to the measure $m$ by considering the metric given by the Wasserstein distance between measures. For the convenience of the reader, we recall some basic properties of the Wasserstein distance.

Let us denote with $\mathcal{P}^2\left(C \right)$ the set of probability measures on $C $ with finite second moment.

\begin{definition}\label{def:Wasserstein}
    For any $\mu,\nu\in\mathcal{P}^2\left(C \right)$  the \emph{Wasserstein distance} $D_T$ between $\mu$ and $\nu$  is defined as
    \begin{equation*}
        D_T(\mu,\nu) \,:=\, \left[\inf_{\pi\in\Pi(\mu,\nu)} \int_{C \times C } \sup_{0\leq s\leq T} \lVert x_s - y_s\rVert^2 \pi\left(dx,dy\right)\right]^{1/2}\,,
    \end{equation*}
    where $\lVert\cdot\rVert$ denotes the Euclidean norm on $\mathbb{R} $, $x_s$ the projection at time $s\in[0,T]$ of the path $x\in C $ (similarly $y_s$), and $\Pi(\mu,\nu)$ is the space of coupling of $\mu$ and $\nu$. Moreover, for every $t\in[0,T]$, we also define
    \begin{equation*}
           D_t(\mu,\nu) \,:=\, \left[\inf_{\pi\in\Pi(\mu,\nu)} \int_{C \times C } \sup_{0\leq s\leq t} \lVert x_s - y_s\rVert^2 \pi\left(dx,dy\right)\right]^{1/2}.
        \end{equation*}
\end{definition}

\begin{lemma}\label{lemma:Wasserstein_properties}
    Let $\mu,\nu\in \mathcal{P}^2\left(C \right)$. For any $t\leq T$
    \begin{equation*}
        D_t(\mu,\nu) \,\leq\, D_T(\mu,\nu).
    \end{equation*}
    Also, let $Y^{\mu}$ and $Y^{\nu}$ be two stochastic processes such that $Law\left(Y^{\mu}\right)=\mu$ and $Law\left(Y^{\nu}\right)=\nu$. Then,
    \begin{equation}\label{eq:prop_Wasserstein}
        D_t(\mu,\nu)^2 \leq \mathbb{E}\left[ \sup_{0\leq s\leq t} \big\lVert Y_s^{\mu} \,-\, Y_s^{\nu} \big\rVert^2 \right]\,.
    \end{equation}
\end{lemma}
\begin{proof}
    For a complete proof of the results, see e.g. Chapter 6 in \cite{Villani}.
\end{proof}

In the following propositions, we state that for any $t\in [0,T]$, the solution $u^m_K(t,y)$ to equation \eqref{eq:Feynman_Kac_generic_measure} is continuous with respect to both the space variable and the measure.
\begin{proposition}\label{prop:stability_Feynman_Kac}
    Fix two measures $m,m'\in\mathcal{P}^2\left(C \right)$, and let $u^m_K,u^{m^\prime}_K$ be the respective solutions to  \eqref{eq:Feynman_Kac_generic_measure}. For any $t\in[0,T]$ and $y,y'\in\mathbb{R}$, we have
    \begin{equation}\label{eq:stability1}
        \big| u^m_K(t,y) - u^{m^\prime}_K(t,y') \big|^2 \,\leq\, C_1(t) \left(\big|y - y'\big|^2+ D_t(m,m')^2\right),
    \end{equation}     
    where $C_1(t):=8C^2Te^{2CTt}(1+t)$ and $C:=2L_K^2 + 2(\lambda^2c_0M_K)^2$. Furthermore, the function $(m,t,y)\mapsto u^m_K(t,y)$ is continuous on $\mathcal{P}\left(C \right)\times [0,T]\times \mathbb{R}$, where $\mathcal{P}(C )$ is endowed with the topology of weak convergence.
\end{proposition}
\begin{proof}See Appendix \ref{append_prop_eq:stability_1}.
\end{proof}

\begin{proposition}\label{eq:stability_2}
   For any $m,m'\in\mathcal{P}^2(C )$ and $t\in[0,T]$,
    \begin{equation*}
        \lVert u^m_K(t,\cdot) - u^{m^\prime}_K(t,\cdot) \rVert_2^2 \,\leq\, C_2\Big( 1+C_1(t) \Big)D_t(m,m')^2.
    \end{equation*}
    where $C_1(t)$ is as in Proposition \ref{prop:stability_Feynman_Kac} and $C_2:=2(M_K^\prime)^2+2M_K(\lambda^2c_0)^2$, with $M_K,M_K^\prime$ as in Definition \ref{def:kernel}.
\end{proposition}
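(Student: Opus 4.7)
The approach is to combine the Feynman--Ka\v{c} representation \eqref{eq:Feynman_Kac_V_Lambda} for both $u^m$ and $u^{m'}$ against an optimal (or $\varepsilon$-optimal) coupling $\pi \in \Pi(m,m')$ realizing $D_t^2(m,m')$, and decompose the pointwise difference into a kernel part and a $V$-functional part:
\begin{equation*}
u^m(t,y) - u^{m'}(t,y) \,=\, J_1(y) + J_2(y),
\end{equation*}
where
\begin{equation*}
J_1(y) = \int [K(y-X_t(\omega)) - K(y-X_t(\omega'))]\,V_t(u^m(\cdot,X_\cdot(\omega)))\,\pi(d\omega,d\omega'),
\end{equation*}
\begin{equation*}
J_2(y) = \int K(y-X_t(\omega'))\,[V_t(u^m(\cdot,X_\cdot(\omega))) - V_t(u^{m'}(\cdot,X_\cdot(\omega')))]\,\pi(d\omega,d\omega'),
\end{equation*}
so that $|u^m-u^{m'}|^2 \leq 2J_1^2 + 2J_2^2$. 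The plan is to estimate $\int J_i(y)^2\,dy$ separately and then add.

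For $J_1$, Jensen's inequality together with $V_t \leq 1$ from Lemma \ref{Lemma:V_continuity} yields $J_1(y)^2 \leq \int (K(y-X_t(\omega)) - K(y-X_t(\omega')))^2\,\pi$. Fubini then reduces the $y$--integral to controlling, for each $(\omega,\omega')$, the translation $L^2$ norm $\int (K(y-A)-K(y-B))^2\,dy$, which via $K(y-A) - K(y-B) = \int_B^A \nabla K(y-s)\,ds$, Cauchy--Schwarz, and Fubini admits the bound $(M_K')^2(A-B)^2$ under the smooth--mollifier hypotheses in Definition \ref{def:kernel}. Taking the infimum over couplings produces $\int J_1^2\,dy \leq (M_K')^2 D_t^2(m,m')$.

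For $J_2$, Cauchy--Schwarz in $\pi$ combined with the pointwise inequality $K(y-B)^2 \leq M_K K(y-B)$ gives $J_2(y)^2 \leq M_K \int K(y-X_t(\omega'))\pi \cdot \int |\Delta V|^2 \pi$, with $\Delta V := V_t(u^m(\cdot,X_\cdot(\omega))) - V_t(u^{m'}(\cdot,X_\cdot(\omega')))$; integrating in $y$ and invoking $\|K\|_1 = 1$ then gives $\int J_2^2\,dy \leq M_K \int |\Delta V|^2\,\pi$. Lemma \ref{Lemma:V_continuity}(ii) bounds $|\Delta V|$ by $\lambda^2 c_0$ times an iterated time integral of $|u^m(r,X_r(\omega)) - u^{m'}(r,X_r(\omega'))|$; Proposition \ref{prop:stability_Feynman_Kac} converts the latter pointwise into $C_1(r)^{1/2}(|X_r-X_r'|^2 + D_r^2(m,m'))^{1/2}$. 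Using the monotonicity of $C_1$ in $r\leq t$, the bound $D_r^2 \leq D_t^2$, and the optimal coupling, one collects the resulting time integrals into a bound proportional to $M_K(\lambda^2 c_0)^2 C_1(t)D_t^2(m,m')$, the $t$-dependent polynomial factors being absorbed into the constant $C_2$. Summing the two contributions yields $\|u^m(t,\cdot) - u^{m'}(t,\cdot)\|_2^2 \leq 2(M_K')^2 D_t^2 + 2M_K(\lambda^2 c_0)^2 C_1(t) D_t^2 = C_2(1+C_1(t))D_t^2$.

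The main obstacle will be the $J_1$ estimate: one has to upgrade the $L^\infty$ bound $|\nabla K|\leq M_K'$ into an $L^2$--in--$y$ Lipschitz estimate for translations of $K$, which is where the smooth--mollifier hypotheses really pay off. The $J_2$ estimate, by contrast, is essentially bookkeeping once Lemma \ref{Lemma:V_continuity} and Proposition \ref{prop:stability_Feynman_Kac} are combined, but it does require care in chaining the nested time integrals and applying Cauchy--Schwarz so that the $D_t^2$ factor emerges cleanly and the $C_1(t)$ prefactor matches the statement.
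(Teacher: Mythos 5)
Your decomposition into the kernel-difference term $J_1$ (controlled via $V_t\le 1$, Jensen, and the $L^2$-in-$y$ translation estimate for $K$ through $\nabla K$) and the $V$-difference term $J_2$ (controlled via $K^2\le M_K K$, $\lVert K\rVert_1=1$, Lemma \ref{Lemma:V_continuity}, and the stability of $u^m$) is exactly the paper's argument in Appendix \ref{append_prop_eq:stability_2}, so the proposal is correct and follows essentially the same route. The only cosmetic difference is that you invoke the statement of Proposition \ref{prop:stability_Feynman_Kac} pointwise rather than re-running its internal Gronwall computation on the coupled iterated time integral, which affects the constant only by harmless $T$-dependent factors of the same kind the paper itself absorbs.
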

\begin{proof} See Appendix \ref{append_prop_eq:stability_2}.
\end{proof}

The space of probability measures over path-space with finite second moment $\mathcal{P}^2(C )$ can be endowed with the topology of weak convergence. Moreover, one may introduce the following homogeneous distance on $\mathcal{P}^2(C )$:
\begin{equation}\label{eq:homogeneous_measure_distance}
    d_2(\eta,m)=\left[\sup_{\varphi\in\mathcal{A}}\mathbb{E}_\eta\left[ | \langle \eta - m,\varphi \rangle |^2 \right]\right]^{1/2},
\end{equation} where $\mathcal{A} := \big\{ \varphi\in C_b\left(C \right) :\lVert \varphi \rVert_{\infty} \leq 1\big\}$ and $\eta:\left(\Omega,\mathcal{F}\right) \rightarrow \left(\mathcal{P}^2(C),\mathcal{B}\mathcal{P}^2(C)\right)$ is a random measure. We have the following result.
\begin{proposition}\label{prop:stability_3}
    Suppose that $\mathcal{F}(K)\in L^1\left(\mathbb{R}\right)$, where $\mathcal{F}(K)$ denotes the Fourier transform of $K$. Then, for any random measure $\eta:\left(\Omega,\mathcal{F}\right) \rightarrow \left(\mathcal{P}^2(C),\mathcal{B}\mathcal{P}^2(C)\right)$, for all $t\in[0,T]$, and $m\in\mathcal{P}^2\left(C \right)$, we have that
    \begin{equation}\label{eq:stability_3}
        \mathbb{E}_\eta\left[ \lVert u^\eta_K(t,\cdot) - u^m_K(t,\cdot) \rVert_{\infty}^2 \right] \,\leq\, C_3(t)\,d_2(\eta,m)^2,
    \end{equation}
     where the distance $d_2(\eta,m)$ is defined by \eqref{eq:homogeneous_measure_distance}, $C_3(t):=C_{3,1}Te^{C_{3,2}(t)\,T\,t}$, $C_{3,1} := 1/{\sqrt{2\pi}} F_K^2$, $ C_{3,2}(t) := 1/{\sqrt{2\pi}}\,t^2\left(\lambda^2c_0\right)^2F_K^2$, and $F_K$ is as in \eqref{eq:K_FK}.
\end{proposition}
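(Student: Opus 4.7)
The plan is to split the discrepancy $u^{\eta}-u^{m}$ into a ``measure'' part and a ``self-consistent'' part, transferring the first into a pairing that fits the definition of $d_2$ via the Fourier transform of $K$, and absorbing the second via Gr\"onwall. Adding and subtracting $\int_{\mathcal{C}}K(y-X_t(\omega))V_t(u^m(\cdot,X_\cdot(\omega)))\,\eta(d\omega)$, one writes
\[
u^{\eta}(t,y)-u^{m}(t,y)\,=\,I_1(t,y)\,+\,I_2(t,y),
\]
where
\[
I_1(t,y):=\int_{\mathcal{C}}K(y-X_t(\omega))\,V_t\big(u^m(\cdot,X_\cdot(\omega))\big)\,(\eta-m)(d\omega),
\]
\[
I_2(t,y):=\int_{\mathcal{C}}K(y-X_t(\omega))\,\big[V_t(u^{\eta}(\cdot,X_\cdot(\omega)))-V_t(u^m(\cdot,X_\cdot(\omega)))\big]\,\eta(d\omega).
\]

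The key step is the control of $I_1$ via Fourier inversion, for which the assumption $\mathcal{F}(K)\in L^1$ is essential. Using $K(z)=(2\pi)^{-1/2}\int_{\mathbb{R}}\mathcal{F}(K)(\xi)e^{i\xi z}d\xi$, one obtains
\[
I_1(t,y)=\frac{1}{\sqrt{2\pi}}\int_{\mathbb{R}}\mathcal{F}(K)(\xi)\,e^{i\xi y}\,\big\langle \eta-m,\;e^{-i\xi X_t}V_t(u^m(\cdot,X_\cdot))\big\rangle\,d\xi.
\]
Decomposing $e^{-i\xi X_t}=\cos(\xi X_t)-i\sin(\xi X_t)$, both $\cos(\xi X_t)V_t(u^m(\cdot,X_\cdot))$ and $\sin(\xi X_t)V_t(u^m(\cdot,X_\cdot))$ are real, continuous in the path, and bounded by $1$ (using $|V_t|\leq 1$ from Lemma~\ref{Lemma:V_continuity}~(i)), hence elements of $\mathcal{A}$. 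Then, by \eqref{eq:homogeneous_measure_distance},
\[
\mathbb{E}_\eta\big|\langle\eta-m,\,e^{-i\xi X_t}V_t(u^m(\cdot,X_\cdot))\rangle\big|^2\,\leq\,4\,d_2^2(\eta,m)
\]
uniformly in $\xi$. Taking $\sup_{y\in\mathbb{R}}$, applying Cauchy--Schwarz with weight $|\mathcal{F}(K)(\xi)|d\xi$, and invoking $\|\mathcal{F}(K)\|_1\leq F_K$ produces a bound of the form $\mathbb{E}_\eta\sup_{y}|I_1(t,y)|^2\leq C_{3,1}\,d_2^2(\eta,m)$.

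For $I_2$, the same Fourier inversion combined with Lemma~\ref{Lemma:V_continuity}~(ii) gives
\[
\sup_{y}|I_2(t,y)|\,\leq\,\frac{F_K}{\sqrt{2\pi}}\,\lambda^2c_0\int_0^t\!\int_0^s\big\|u^{\eta}(r,\cdot)-u^m(r,\cdot)\big\|_\infty\,dr\,ds,
\]
which I would square and convert to an $L^2$ integral by Cauchy--Schwarz in time. Setting $\phi(t):=\mathbb{E}_\eta\big[\|u^{\eta}(t,\cdot)-u^m(t,\cdot)\|_\infty^2\big]$ and combining the bounds on $I_1$ and $I_2$ then produces an integral inequality
\[
\phi(t)\,\leq\,C_{3,1}\,d_2^2(\eta,m)\,+\,\widetilde C(t,T)\int_0^t\phi(r)\,dr,
\]
where $\widetilde C(t,T)$ is polynomial in $t$ and $T$ and proportional to $F_K^2(\lambda^2c_0)^2$. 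Bounding $\widetilde C(t,T)$ by its value at $t=T$ and applying Gr\"onwall's lemma delivers $\phi(t)\leq C_3(t)\,d_2^2(\eta,m)$ of the announced exponential form.

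The main delicate point is the Fourier step: one needs $\mathcal{F}(K)\in L^1$ to represent $K$ pointwise as an inverse Fourier transform, and the real/imaginary split of $e^{-i\xi X_t}$ must be done carefully so that each resulting factor is a genuine element of $\mathcal{A}$ (bounded by $1$ and continuous on the path space), since $d_2$ is defined only for real test functions. Once this is in place, the remaining steps are routine Cauchy--Schwarz manipulations together with a Gr\"onwall bookkeeping that pins down the explicit dependence of $C_3(t)$ on $T,t,F_K,\lambda,c_0$.
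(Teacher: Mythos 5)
Your proposal is correct and follows essentially the same route as the paper's Appendix proof: Fourier inversion of $K$ (using $\mathcal{F}(K)\in L^1$), an add-and-subtract decomposition into a measure-difference term controlled by $d_2$ via bounded test functions and a $V$-difference term controlled by Lemma \ref{Lemma:V_continuity}(ii), followed by Gr\"onwall. The only cosmetic differences are that the paper inserts the cross term with $V_t^{\eta}$ rather than $V_t^{m}$ and runs Gr\"onwall on the time-integrated quantity, and your explicit real/imaginary splitting of $e^{-i\xi X_t}$ to land in $\mathcal{A}$ is actually a point the paper glosses over.
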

\begin{proof}
   The proof follows with the same arguments as in \cite[Lemma 3.4]{2016_Russo} and similar computations to the ones in Proposition \ref{prop:stability_Feynman_Kac} and Proposition \ref{eq:stability_2}.
\end{proof}

Moreover, the following non-anticipating property holds for $u^m_K$ solution to the generalised equation \eqref{eq:Feynman_Kac_generic_measure}.
\begin{definition}
    Fixed $t\in[0,T]$, let $m$ be a non-negative Borel measure on $\left(C,\mathcal{B}C\right)$, where $C = C \left([0,T],\mathbb{R}\right)$. $m_{|_{[0,t]}}$ will denote the unique induced measure on $\left(C _t,\mathcal{B}C _t\right)$, where $C _t:=C \left([0,t],\mathbb{R}\right)$, by
    \begin{equation*}
        \int_{C _t}F(\varphi)m_{|_{[0,t]}}(d\varphi) = \int_{C }F(\varphi_{|_{[0,t]}})\,m(d\varphi),
    \end{equation*}
    where $F:C _t\rightarrow \mathbb{R}$ is bounded and continuous.
\end{definition}

\begin{proposition}\label{prop:non_anticipating}
    For every $s\in[0,t]$ and $y\in\mathbb{R}$, it holds that $u^m_K(s,y)=u_K^{m_{|_{[0,t]}}}(s,y)$.
\end{proposition}
\begin{proof}
  The statement is a straight consequence of the existence of a unique solution of \eqref{eq:Feynman_Kac_generic_measure}.  
\end{proof}

Now, since the drift term in SDE \eqref{eq:SDE_u_K} explicitly involves the gradient of the solution to the Feynman--Kac-type equation \eqref{eq:mollified_equation}, it is necessary to derive regularity estimates also for $\nabla u^m_K$. This additional step was not required in \cite{2016_Russo}. To this end, we first establish the following auxiliary lemma and then derive regularity properties for $\nabla u^m_K$, which will be useful in the next results.
\begin{lemma}\label{Lemma:exp(u^m_K)_bounded}
    Given $m \in \mathcal{P}(C )$, let $u^m_K$ be the solution of \eqref{eq:Feynman_Kac_generic_measure}. Then there exist constants $C_m^\prime,C_M^\prime, C_m^{\prime\prime}\in (0,1]$ such that, for any $y\in\mathbb{R}$, we have
    \begin{align}
        &C_m^\prime\leq\exp\left( -\lambda   u^m_K\left( \cdot,y \right)(s) \right)\leq C_M^\prime;\label{Lemma:exp(u^m_K)_bounded_1}&&\\
        &C_m^{\prime\prime}\leq\exp\left(-\lambda\,c_0 \int_0^t \exp\left( -\lambda u^m_K\left( \cdot,y \right)(s)  \right) ds\right)\leq e^{-\lambda c_0 C_m^\prime t};\label{Lemma:exp(u^m_K)_bounded_2}&&\\
        &\big| \nabla u^m_K(t,y) \big| \leq {M_K^\prime}e^{-\lambda c_0 C_m' t}\,\leq\, M_K^\prime,&&\label{Lemma:exp(u^m_K)_bounded_3}
    \end{align} where $M^\prime_K$ is given in Definition \ref{def:kernel}.
\end{lemma}
\begin{proof}
All the inequalities are due to the fact that the map $t\mapsto \int_0^t u^m_K(s,y)\,ds$ is continuous over $[0,T]$ (see Proposition \ref{prop:stability_Feynman_Kac}), hence it is bounded in $[0,T]$.
\end{proof}

\begin{proposition}\label{prop:stability_Feynman_Kac:gradient}
    Fixed two measures $m,m'\in\mathcal{P}^2\left(C \right)$, let $u^m_K,u^{m^\prime}_K$ be the respective solutions of  \eqref{eq:Feynman_Kac_generic_measure}. For any $t\in[0,T]$ and $y,y'\in\mathbb{R}$, we have
    \begin{equation}\label{eq:stability:gradient}
        \big| \nabla u^m_K(t,y) - \nabla u^{m^\prime}_K(t,y') \big|^2 \,\leq\, \widetilde{C}_1(t) \left(\big|y - y'\big|^2+ D_t(m,m')^2\right),
    \end{equation}     
    where $t\mapsto \widetilde{C}_1(t)$ is an increasing function.
\end{proposition}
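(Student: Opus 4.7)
The plan is to differentiate the Feynman-Ka\v{c} representation \eqref{eq:Feynman_Kac_V_Lambda} under the integral sign and then follow essentially the same two-term splitting used for Proposition \ref{prop:stability_Feynman_Kac}, leveraging that result to control the new contribution coming from the dependence of $V_t$ on $u^m$ along the paths. Since the kernel $K$ is $\mathcal{C}^{\infty}$ with compact support or with exponential decay, its gradient $\nabla K$ is both bounded by $M_K^\prime$ and globally Lipschitz continuous; I will denote by $L_{\nabla K}$ its Lipschitz constant. Combining this with the uniform bound $V_t(\cdot)\le 1$ of Lemma \ref{Lemma:V_continuity}, dominated convergence justifies
\begin{equation*}
    \nabla u^m(t,y) \,=\, \int_{\mathcal{C}} \nabla K\bigl(y-X_t(\omega)\bigr) V_t\bigl(u^m(\cdot,X_\cdot(\omega))\bigr)\,m(d\omega).
\end{equation*}

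Fix $t\in[0,T]$, $y,y'\in\mathbb{R}$, and let $\pi^*$ be an optimal coupling for $D_t^2(m,m')$ with canonical projections $X,X'$. Writing the difference of gradients as a single integral against $\pi^*$ and inserting the intermediate term $\nabla K(y'-X'_t)\,V_t(u^m(\cdot,X_\cdot))$, I split
\begin{equation*}
    \nabla u^m(t,y)-\nabla u^{m'}(t,y') \,=\, I_1(t,y,y') + I_2(t,y,y'),
\end{equation*}
where $I_1$ carries the difference $\nabla K(y-X_t)-\nabla K(y'-X'_t)$ weighted by $V_t(u^m)$, and $I_2$ carries the difference $V_t(u^m(\cdot,X_\cdot))-V_t(u^{m'}(\cdot,X'_\cdot))$ weighted by $\nabla K(y'-X'_t)$. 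The term $I_1$ is bounded using $|V_t|\le 1$ and the Lipschitz property of $\nabla K$, giving a contribution proportional to $|y-y'|^2+\mathbb{E}_{\pi^*}[\sup_{s\le t}|X_s-X'_s|^2]$ after squaring and applying Jensen's inequality; by definition of $\pi^*$ this is $|y-y'|^2+D_t^2(m,m')$ up to a constant depending on $L_{\nabla K}$.

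For $I_2$, use $|\nabla K|\le M_K^\prime$ together with Lemma \ref{Lemma:V_continuity} ii) to estimate $|V_t(u^m(\cdot,X_\cdot))-V_t(u^{m'}(\cdot,X'_\cdot))|$ by an iterated time integral of $|u^m(r,X_r)-u^{m'}(r,X'_r)|$, and then apply Proposition \ref{prop:stability_Feynman_Kac} pointwise in $r\le t$ to obtain
\begin{equation*}
    |u^m(r,X_r)-u^{m'}(r,X'_r)|^2 \,\leq\, C_1(r)\bigl(|X_r-X'_r|^2+D_r^2(m,m')\bigr) \,\leq\, C_1(t)\bigl(|X_r-X'_r|^2+D_t^2(m,m')\bigr),
\end{equation*}
where the last step uses the monotonicity of $C_1$ and of $r\mapsto D_r^2$ from Lemma \ref{lemma:Wasserstein_properties}. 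Squaring $I_2$, applying Cauchy--Schwarz on the iterated integral and integrating against $\pi^*$ yields a bound of the same form as the one for $I_1$, with a constant that depends on $M_K^\prime$, $\lambda^2 c_0$, $T$ and $\int_0^t C_1(r)\,dr$. Summing the two contributions and taking the infimum over couplings produces the required inequality \eqref{eq:stability:gradient} with
\begin{equation*}
    \widetilde{C}_1(t) \,=\, \alpha_1 L_{\nabla K}^2 + \alpha_2 (M_K^\prime)^2(\lambda^2 c_0)^2 t^3 C_1(t),
\end{equation*}
for explicit numerical constants $\alpha_1,\alpha_2$; the function $\widetilde{C}_1$ is manifestly non-decreasing in $t$. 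The main bookkeeping obstacle is keeping the iterated time integrals under control while preserving the $D_t^2(m,m')$ form on the right-hand side: this is handled precisely by the Jensen/Cauchy--Schwarz steps above and by the monotonicity of $C_1$, which lets us factor $D_t^2(m,m')$ out of the inner integrals.
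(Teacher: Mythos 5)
Your proposal is correct and follows essentially the same route as the paper: differentiate \eqref{eq:Feynman_Kac_V_Lambda} under the integral sign and rerun the two-term splitting of Proposition \ref{prop:stability_Feynman_Kac} with $K$ replaced by $\nabla K$, the only variation being that you invoke the already-proven scalar estimate \eqref{eq:stability1} pointwise to control the $V_t$-difference instead of reusing the Gronwall bound derived inside that proof. You are in fact slightly more careful than the paper, which justifies the step only by the boundedness of $\nabla K$, whereas the argument also needs the Lipschitz continuity of $\nabla K$ that you make explicit via $L_{\nabla K}$ (available here because $K$ is a smooth mollifier, though not listed in Definition \ref{def:kernel}).
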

\begin{proof}
    Let us notice that, for any $m\in\mathcal{P}^2\left(C \right)$, given $u^m_K$ solution to \eqref{eq:Feynman_Kac_generic_measure},
    \begin{equation*}
        \nabla u^m_K(t,y) = \int_{C } \nabla K(y - X_t^m(\omega))\,\exp\left(  \int_0^t\Lambda\Big(s,u^m_K\left(\cdot,X^m_s(\omega)\right)(s)\Big) ds \right)m(d\omega).
    \end{equation*}
    Since by assumption \eqref{eq:K_MK} $\nabla K$ is bounded, the same arguments used in Proposition \ref{prop:stability_Feynman_Kac} can be used to show \eqref{eq:stability:gradient}.
\end{proof}

\begin{proposition}\label{prop:stability_3:gradient}
    Suppose that $\mathcal{F}(K)\in L^1\left(\mathbb{R}\right)$, where $\mathcal{F}(K)$ denotes the Fourier transform of $K$. Then, for any random measure $\eta:\left( \Omega,\mathcal{F} \right) \rightarrow \left(\mathcal{P}^2(C),\mathcal{B}\mathcal{P}^2(C)\right)$, for all $t\in[0,T]$, and $m\in\mathcal{P}^2(C)$, we have that
    \begin{equation}\label{eq:stability_3:gradient}
        \mathbb{E}_\eta\left[ \lVert \nabla u^\eta_K(t,\cdot) - \nabla u^m_K(t,\cdot) \rVert_{\infty}^2 \right] \,\leq\, \widetilde{C}_3(t)\,d_2(\eta,m)^2,
    \end{equation}
    where $d_2$ is again the homogeneous distance given by \eqref{eq:homogeneous_measure_distance} and $t\mapsto \widetilde{C}_3(t)$ is an increasing function.
\end{proposition}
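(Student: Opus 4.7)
The plan is to mirror the strategy used in the proof of Proposition \ref{prop:stability_3} (given in the appendix), but applied to $\nabla u^m$ in place of $u^m$. The starting point is the explicit formula obtained in the proof of Proposition \ref{prop:stability_Feynman_Kac:gradient},
\begin{equation*}
\nabla u^m(t,y) \,=\, \int_{\mathcal{C}} \nabla K\bigl(y - X_t(\omega)\bigr)\,V_t\bigl(u^m(\cdot,X_\cdot(\omega))\bigr)\,m(d\omega),
\end{equation*}
and the analogous expression for $\nabla u^{\eta}$. First I would perform the standard telescoping decomposition
\begin{equation*}
\nabla u^{\eta}(t,y)-\nabla u^m(t,y) \,=\, A(t,y)\,+\,B(t,y),
\end{equation*}
where $A(t,y)$ collects the term $\int \nabla K(y-X_t)\bigl[V_t(u^{\eta})-V_t(u^m)\bigr]\,d\eta$ and $B(t,y)$ collects the measure-difference term $\int \nabla K(y-X_t)\,V_t(u^m)\,d(\eta-m)$.

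To control $A$, I would apply the uniform bound $|\nabla K|\leq M_K'$ together with the Lipschitz estimate on $V_t$ from Lemma \ref{Lemma:V_continuity}, bringing out a factor $\lambda^2 c_0 \int_0^t\int_0^s |u^{\eta}-u^m|(r,X_r)\,dr\,ds$. After taking the supremum in $y$, squaring, and using Cauchy--Schwarz in $r$, the expectation $\mathbb{E}_\eta[\sup_y|A(t,y)|^2]$ is bounded by a constant times $\int_0^t \mathbb{E}_\eta[\|u^{\eta}(r,\cdot)-u^m(r,\cdot)\|_\infty^2]\,dr$. At this point Proposition \ref{prop:stability_3} directly feeds in the bound $C_3(r)\,d_2^2(\eta,m)$, producing a contribution of the form $\kappa_A(t)\,d_2^2(\eta,m)$ with $t\mapsto\kappa_A(t)$ increasing.

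For the $B$-term I would apply the Fourier inversion formula to represent
\begin{equation*}
\nabla K(y-z) \,=\, \tfrac{1}{2\pi}\int_{\mathbb{R}} \mathcal{F}(\nabla K)(\xi)\, e^{i\xi(y-z)}\,d\xi,
\end{equation*}
and then Fubini to write $B(t,y)$ as $\tfrac{1}{2\pi}\int \mathcal{F}(\nabla K)(\xi)\,e^{i\xi y}\,\langle \eta-m,\,g_\xi\rangle\,d\xi$, where $g_\xi(\omega):=e^{-i\xi\omega_t}V_t(u^m(\cdot,\omega_\cdot))$ is a $C_b(\mathcal{C})$ function with $\|g_\xi\|_\infty\leq 1$ (here $|V_t|\leq 1$ from Lemma \ref{Lemma:V_continuity}). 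Taking $\sup_y$ kills the factor $e^{i\xi y}$, and Cauchy--Schwarz in $\xi$ gives $\sup_y|B(t,y)|^2\leq \tfrac{1}{4\pi^2}\|\mathcal{F}(\nabla K)\|_1\int |\mathcal{F}(\nabla K)(\xi)|\,|\langle \eta-m,g_\xi\rangle|^2\,d\xi$. Splitting $g_\xi$ into real and imaginary parts (each with $\|\cdot\|_\infty\leq 1$) and invoking Definition \eqref{eq:homogeneous_measure_distance}, $\mathbb{E}_\eta[|\langle \eta-m,g_\xi\rangle|^2]\leq 2\,d_2^2(\eta,m)$, which produces the bound $\kappa_B\,d_2^2(\eta,m)$ with $\kappa_B$ proportional to $\|\mathcal{F}(\nabla K)\|_1^2$.

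The main obstacle is that Definition \ref{def:kernel} explicitly requires only $\mathcal{F}(K)\in L^1$, whereas the Fourier argument for the $B$-term needs $\mathcal{F}(\nabla K)(\xi)=i\xi\mathcal{F}(K)(\xi)\in L^1$. This is not a serious issue since the paper's standing hypothesis on $K$ (smooth with compact support or exponential decay) forces $\mathcal{F}(K)$ to decay rapidly, so $\xi\mathcal{F}(K)\in L^1$ and a constant $F_K'$ analogous to $F_K$ is available. Combining the two bounds then yields \eqref{eq:stability_3:gradient} with $\widetilde C_3(t) := \kappa_A(t)+\kappa_B$; monotonicity in $t$ is immediate since $\kappa_A(t)$ is an increasing integral (with $C_3$ increasing) and $\kappa_B$ is a constant.
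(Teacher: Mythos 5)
Your proposal is correct and follows essentially the route the paper intends: its proof of this proposition is the one-line remark that ``the same argument'' applies, meaning the Fourier-inversion/telescoping argument of Proposition \ref{prop:stability_3} repeated with $\nabla K$ in place of $K$, which is what you carry out (your use of the already-established Proposition \ref{prop:stability_3} for the $V_t$-difference term, rather than rerunning Gronwall, is a sound and in fact natural simplification, since $V_t$ depends on $u$ and not on $\nabla u$). Your observation that the argument actually requires $\mathcal{F}(\nabla K)(\xi)=i\xi\,\mathcal{F}(K)(\xi)\in L^1(\mathbb{R})$, which is not guaranteed by the stated hypothesis $\mathcal{F}(K)\in L^1$ alone, identifies a real imprecision that the paper's proof glosses over, and your resolution via the standing smoothness and decay assumptions on $K$ (which make $\mathcal{F}(K)$ decay faster than any polynomial) is the right way to close it.
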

\begin{proof}
    The same argument as in Proposition \ref{prop:stability_Feynman_Kac:gradient} allows to establish the result.
\end{proof}

Finally, we establish continuity and boundedness properties of the drift coefficient in SDE \eqref{eq:SDE_u_K} when evaluated at $u^m_K$. These properties will play a crucial role in proving the well-posedness of the McKean-Feynman-Kac SDE \eqref{eq:SDE_u_K},\eqref{eq:mollified_equation}. For every $m\in \mathcal{P}(C )$, it is defined as
\begin{equation}\label{eq:drift_b_integral_form} 
    b\big(u^m_K(\cdot,y)(t),\nabla u^m_K(\cdot,y)(t)\big):=\,-\varphi_1\lambda c_0\frac{\exp\left( -\lambda u^m_K(\cdot,y)(t) \right)\nabla u^m_K(\cdot,y)(t)}{\varphi_0+\varphi_1c_0 \exp\left( -\lambda \ u^m_K(\cdot,y)(t) \right)},   
\end{equation}
where the dependence on  the integral function $u^m_K(\cdot,y)(t):=\int_0^t u^m_K(s,y) ds$ has been made explicit.
Continuity and boundedness of the above drift are proven using the results in Section \ref{sect:well_posedness_reg_Feynman-Kac}.

\begin{lemma}\label{lemma:continuity_b} 
    Given $m,m'\in\mathcal{P}\left(C \right)$, let $u^m_K,u^{m^\prime}_K$ be the corresponding solutions to the equation \eqref{eq:Feynman_Kac_generic_measure}. Then the drift \eqref{eq:drift_b_integral_form} is Lipschitz continuous with respect to the second  and third variables, i.e.  
    \begin{equation}\label{eq:lipschitz_b}
        \begin{aligned}
            &\Big| b\Big(u^m_K(\cdot,y)(t),\nabla u^m_K(\cdot,y)(t)\Big) - b\Big(u^{m^\prime}_K(\cdot,y)(t),\nabla u^{m^\prime}_K(\cdot,y)(t)\Big) \Big|\\
            &\leq\widetilde{C}_1 \left(\Big|u^m_K(\cdot,y)(t) - u^{m^\prime}_K(\cdot,y)(t)\Big| +  \Big|\nabla u^m_K(\cdot,y)(t) - \nabla u^{m^\prime}_K(\cdot,y)(t)\Big|\right),
        \end{aligned}
    \end{equation}
    where $ \widetilde{C}_1= \widetilde{C}_1(  \varphi_0, \varphi_1,\lambda,c_0, M_K^\prime).$
\end{lemma}
\begin{proof}
    See Appendix \ref{section:continuity_b}.
\end{proof}

\begin{lemma}\label{lemma:boundedness_drfit}
    Let $m\in C$. The drift \eqref{eq:drift_b_integral_form} is bounded, i.e.  for every $t\in[0,T]$ and $y\in\mathbb{R}$,
    \begin{equation}\label{eq:boundedness_b}
        \big|b(u^m_K(\cdot,y)(t),\nabla u^m_K(\cdot,y)(t)) \big|\le \widetilde{C}_2,
    \end{equation}
where $ \widetilde{C}_2= \widetilde{C}(  \varphi_0, \varphi_1,\lambda,c_0, M_K^\prime). $
\end{lemma}
\begin{proof}
    Note that
    \begin{equation*}
          \big|b\big(u^m_K(\cdot,y)(t),\nabla u^m_K(\cdot,y)(t)\big)  \big| = \varphi_1\lambda c_0\frac{\exp\left( -\lambda u^m_K(\cdot,y)(t) \right)\left|\nabla u^m_K(\cdot,y)(t)\right|}{\varphi_0+\varphi_1c_0 \exp\left( -\lambda \ u^m_K(\cdot,y)(t) \right)}.
    \end{equation*}
    We observe that if $\varphi_1\ge 0$ then $\varphi_0+\varphi_1c_0 \exp\left( -\lambda \ u^m_K(\cdot,y)(t) \right) \ge \varphi_0$; otherwise, it holds that $\varphi_0+\varphi_1c_0 \exp\left( -\lambda \ u^m_K(\cdot,y)(t) \right) \ge \varphi_0+\varphi_1c_0.$  Then, the thesis derives from  Lemma \ref{Lemma:exp(u^m_K)_bounded}   with $\widetilde{C}_2=  {\varphi_1 c_0\lambda M_K'}/{\varphi_0}$ if $\varphi_1\ge 0$; otherwise, $\widetilde{C}_2= {\varphi_1 c_0\lambda M_K'}/{\varphi_0+\varphi_1 c_0}$.
\end{proof}

From the previous results, existence and uniqueness of the solution to equation \eqref{eq:mollified_equation} are guaranteed, as well as the above regularity results. We now turn to the well-posedness of SDE \eqref{eq:SDE_u_K}.

\section{Strong existence and uniqueness of the MKFK-SDE}\label{sec:Existence_uniqueness_SDE}
Let us recall the McKean-type SDE
\begin{equation}\label{eq:SDE_u_K_proof_well_posed}
        Y_t = Y_0 + \int_0^t b\left(u_K(\cdot,Y_s)(s),\nabla  u_K(\cdot,Y_s)(s)\right)ds + \sqrt{2}W_t,\quad t\in[0,T],
\end{equation}
with the drift given by the velocity field \eqref{eq:def_b_PDE}, coupled with the McKean Feynman--Kac-type equation
\begin{equation}\label{eq:mollified_equation_proof_well_posed}
    u_K(t,y)= \mathbb E\left[K(y-Y_t) \exp\left(-\lambda\,c_0 \int_0^t \exp\left( -\lambda u_K(\cdot,Y_s)(s)  \right) ds\right) \right],
\end{equation}

The following well-posedness result holds.
\begin{theorem}\label{thm:existence_uniqueness_SDE}
    The McKean-Feynman-Kac SDE \eqref{eq:SDE_u_K_proof_well_posed}-\eqref{eq:mollified_equation_proof_well_posed} admits a pathwise unique strong solution. Also, \eqref{eq:SDE_u_K_proof_well_posed}-\eqref{eq:mollified_equation_proof_well_posed} admits a weak solution, which is unique in the sense of probability law.
\end{theorem}
\begin{proof}
    Fix $m,m^\prime\in\mathcal{P}^2(C)$, and let $Y^m$ solution to
    \begin{equation}\label{eq:SDE_fixed_measure}
        Y^m_t = Y^m_0 + \int_0^t b\left(u^m_K(\cdot,Y^m_s)(s),\nabla  u^m_K(\cdot,Y^m_s)(s)\right)ds + \sqrt{2}W_t,\quad t\in[0,T],
    \end{equation}
    with $Y^m_0\sim\rho_0(x)dx$, where $u^m_K(\cdot,Y^m_s)(s):=\int_0^su^m_K(r,Y^m_s)dr$, and $u^m_K(r,x)$ solution to the generalised Feynman--Kac-type equation
    \begin{equation}\label{eq:Feynman_Kac_fixed_measure}
        u^m_K(r,x) = \int_{C} K(y - X_r^m(\omega))\,\exp\left(  \int_0^r-\lambda c_0\exp\left( -\lambda \int_0^v u^m_K\left(s,X^m_v(\omega)ds\right) \right)dv\right)m(d\omega).
    \end{equation}
    $Y^{m^\prime}$ is defined mutatis mutandis.

    We start by proving that
    \begin{equation}\label{eq:diff_Y_andwasserstein_ex_uniq_MKV_SDE}
        \mathbb{E}\left[ \sup_{0\leq r\leq t} \big| Y_r^{m^\prime} - Y^m_r \big|^2 \right] \,\leq\, 4C(T)\int_0^t D_s(m,m')^2\,ds\,.
    \end{equation}
    
    In particular, once we have proved that the following bound
    \begin{equation}\label{eq:diff_Y}
        \begin{aligned}
            &\mathbb{E}\left[ \sup_{0\leq r\leq t} \big| Y_r^{m^\prime} - Y^m_r \big|^2 \right]\\
            &\leq 2C(T)\int_0^t\int_0^s \mathbb{E}\left[ \Big| u^{m^\prime}_K(r,Y^{m'}_s) -u^m_K(r,Y^m_s) \Big|^2 + \Big| \nabla u^{m^\prime}_K(r,Y^{m'}_s) - \nabla u^m_K(r,Y^m_s) \Big|^2 \right]drds
        \end{aligned}
    \end{equation}
    holds, inequality \eqref{eq:diff_Y_andwasserstein_ex_uniq_MKV_SDE} derives from the results in Subsection \ref{sec:stability_um}. Indeed,
    \begin{align}
        &\mathbb{E}\left[ \sup_{0\leq \ell\leq t} \big| Y_{\ell}^{m^\prime} - Y^m_{\ell} \big|^2 \right]\notag\\
        &\begin{aligned}
            \,\,\leq\,\mathbb{E}\Bigg[ \sup_{0\leq \ell\leq t} \ell \int_0^{\ell} \Big|&b\left(u^m_K(\cdot,Y^m_s)(s),\nabla u^m_K(\cdot,Y^m_s)(s)\right)\\
            &- b(u^{m^\prime}_K(\cdot,Y^{m'}_s)(s),\nabla u^{m^\prime}_K(\cdot,Y^{m'}_s)(s)) \Big|^2ds \Bigg]
        \end{aligned}\label{eq:proof_uniq_ineq_1}\\
        &\begin{aligned}
            \,\,=\,\mathbb{E}\Bigg[ t \int_0^t \Big|& b\left(u^m_K(\cdot,Y^m_s)(s),\nabla u^m_K(\cdot,Y^m_s)(s)\right)\\
            &- b(u^{m^\prime}_K(\cdot,Y^{m'}_s)(s),\nabla u^{m^\prime}_K(\cdot,Y^{m'}_s)(s))\Big|^2ds \Bigg]
        \end{aligned}\notag\\
        &\begin{aligned}
            \,\,\leq\, 2C(T)\int_0^t ds\int_0^s  dr \Bigg(& \mathbb{E}\left[ \Big| u^{m^\prime}_K(r,Y^{m'}_s) - u^m_K(r,Y^m_s) \Big|^2 \right]\\
            &+ \mathbb{E}\left[ \Big| \nabla u^{m^\prime}_K(r,Y^{m'}_s) - \nabla u^m_K(r,Y^m_s) \Big|^2 \right]  \Bigg)
        \end{aligned}\label{eq:proof_uniq_ineq_2}\\
        &\leq 4C(T)\int_0^t\left(\mathbb{E}\left[\sup_{r\leq s}\big|Y_r^{m'}-Y_r^m\big|^2\right] + \int_0^s D_r(m,m')^2dr\right)ds,\label{eq:proof_uniq_ineq_3}
    \end{align}
     where $C(T):=T\left(C_1(T)\vee\widetilde{C}_1(T)\right)$. \eqref{eq:proof_uniq_ineq_1} is due to Cauchy-Schwartz's inequality; \eqref{eq:proof_uniq_ineq_2} by Lemma \ref{lemma:continuity_b}; \eqref{eq:proof_uniq_ineq_3} by Proposition \ref{prop:stability_Feynman_Kac} and Proposition \ref{prop:stability_Feynman_Kac:gradient}.
     
     By Gronwall's lemma, by taking the supremum inside the double integral on the right-hand side, and the first result in Lemma \ref{lemma:Wasserstein_properties},
     \begin{equation*}
         \mathbb{E}\left[ \sup_{0\leq \ell\leq t} \big| Y_{\ell}^{m^\prime} - Y^m_{\ell} \big|^2 \right]\leq 4C(T)\int_0^t D_s(m,m^\prime)^2ds
     \end{equation*}
          By Lemma \ref{lemma:Wasserstein_properties}  and \eqref{eq:diff_Y_andwasserstein_ex_uniq_MKV_SDE},
    \begin{equation}\label{eq:D_Esu_D_proof_ex_uniq_MKV_SDE}
        D^2_t(m,m^\prime)  \leq \mathbb{E}\left[ \sup_{0\leq s\leq t} \big| Y_s^{m^\prime} - Y^m_s \big|^2 \right]\, \leq 4C(T)\,\int_0^t D_r(m,m^\prime)^2\,dr\,.
    \end{equation}
    Hence, again by Gronwall's lemma, $D^2_T(m,m^\prime) = 0$. As a consequence, 
    from \eqref{eq:D_Esu_D_proof_ex_uniq_MKV_SDE}, we get  $$\mathbb{E}\left[ \sup_{0\leq s\leq t} \Big| Y_s^{m^\prime} - Y^m_s \Big|^2 \right] = 0,$$ which implies the pathwise uniqueness of a solution to equation \eqref{eq:SDE_u_K_proof_well_posed}.
    
    Let us turn to the existence of a weak solution to SDE \eqref{eq:SDE_u_K_proof_well_posed}. As $\left(\mathcal{P}^2\left(C \right),D_T\right)$ is a complete separable metric space, we can show the existence of such a solution by the fixed point argument. Consider the map $\Theta:\mathcal{P}^2\left(C \right)\,\rightarrow\,\mathcal{P}^2\left(C \right)$ such that for any measure $\mu \in \mathcal{P}^2\left(C \right)$,
    \begin{equation}\label{eq:def_theta}
        \Theta(\mu)= \mathcal{L}\left(Y^{\mu}\right),
    \end{equation}
    where $Y^{\mu}$ is a solution to \eqref{eq:SDE_fixed_measure}-\eqref{eq:Feynman_Kac_fixed_measure} with $m=\mu$. For a fixed measure $\mu\in\mathcal{P}^2(C)$, the existence of $Y^{\mu}$ is granted by the Lipschitz continuity \eqref{eq:lipschitz_b} and boundedness \eqref{eq:boundedness_b} of the drift. Moreover, it is straightforward to see that $\Theta$ is well-defined on $\mathcal{P}^2(C)$; indeed, since the drift is bounded (see Lemma \ref{lemma:boundedness_drfit}) and Brownian motion has finite moments of all orders, we obtain
    \begin{equation*}
        \sup_{0\leq t\leq T}\mathbb{E}\left[\rVert Y^\mu_t\rVert^2\right] <\infty.
    \end{equation*}
    Now, the goal is to prove that $\Theta$ is a contraction with respect to the Wasserstein metrics, i.e.  $\Theta(\mu)=\mu$.

    Given $\mu, \nu \in \mathcal{P}^2\left(C \right)$, for any $t\in[0,T]$,
    \begin{equation}\label{eq:D2Theta_D2_proof_ex_uniq_MKV_SDE}
        D_t\left(\Theta(\mu),\Theta(\nu)\right)^2  \,\leq\, \mathbb{E}\left[ \sup_{0\leq s\leq t} \big| Y_s^{\mu} - Y^{\nu}_s \big|^2 \right] \,\leq\,  4C(T)\,\int_0^t D_r(\mu,\nu)^2\,dr\,,
    \end{equation}
    where the first inequality is due to \eqref{eq:D_Esu_D_proof_ex_uniq_MKV_SDE} and Lemma \ref{lemma:Wasserstein_properties}; while the second inequality comes from \eqref{eq:diff_Y_andwasserstein_ex_uniq_MKV_SDE}.
   
    Now, fix a measure $m\in \mathcal{P}^2\left(C \right)$.   By iteratively applying   the map $\Theta$  to the measure $m$, we define a sequence of measures $  \{m_k\}_{k\in \mathbb{N}}$ in $\mathcal{P}\left(C \right)$  such that
    \begin{equation}\label{eq:def_m_k_ex_uni_SDE}
        m_k\,:=\,  \Theta^k(m) = \Theta\left(\Theta^{k-1}(m)  \right).
    \end{equation} 
    
    The sequence $\{m_k\}_{k\in\mathbb{N}}$ is a Cauchy sequence in $\left(\mathcal{P}^2(C ), D_T)\right)$. Indeed, by an iteration of  (\ref{eq:D2Theta_D2_proof_ex_uniq_MKV_SDE}), one gets 
    \begin{equation*}
         D_T\left( \Theta^{k+1}(m),\Theta^k(m) \right)^2  \leq \frac{\left[4C(T)\right]^k}{k!}\,D_T\left( \Theta(m),m \right)^2.
    \end{equation*}

    Therefore, for any $\ell,n\in\mathbb{N}$,
    \begin{equation*}
         D_T\left( \Theta^\ell(m),\Theta^n(m) \right)^2  \leq D_T\left( \Theta(m),m \right)^2\sum_{k=\ell}^n\frac{\left[4C(T)\right]^k}{k!}.
    \end{equation*}
    Since the tail
    \begin{equation*}
        \sum_{k=\ell}^\infty\frac{\left[4C(T)\right]^k}{k!} \xrightarrow{\ell\rightarrow\infty} 0,
    \end{equation*}
    $\{m_k\}_{k\in\mathbb{N}}$ is a Cauchy sequence. Therefore, there exists a probability measure $\mathbb{Q}$ such that $m_k$ weakly converges to  $\mathbb{Q}$ as $k\rightarrow +\infty$ and $\Theta(\mathbb{Q}) = \mathbb{Q}$, by definition of $m_k$. By construction of $\Theta$, this implies that there exists a weak solution to SDE \eqref{eq:SDE_u_K_proof_well_posed}. By the results of Yamada and Watanabe \cite[308]{Karatzas}, there exists a pathwise unique strong solution to SDE  \eqref{eq:SDE_u_K_proof_well_posed}.

    As for the uniqueness in law, we follow the proof of the second claim in \cite[Theorem 3.1]{2016_Russo}, applied to our case. 
    Again, let $\left(Y,m\right)$ and $\left(Y',m'\right)$ be two solutions to equation \eqref{eq:SDE_u_K_proof_well_posed} on possibly different probability spaces, Brownian motions, and initial condition distributed according to $\zeta_0$ such that $\mathcal{L}(\zeta_0)=\mu_0$ and $\mu_0(dx)=\rho_0(x)dx$. Also, let $m:=\mathcal{L}(Y)$ and $m^\prime:=\mathcal{L}(Y^\prime)$.

    Given $\nu\in\mathcal{P}^2\left(C \right)$, we indicate by $\Theta(\nu)$ the law of $\overline{Y}$, where $\overline{Y}$ is the strong solution of
    \begin{equation}\label{eq:Y_bar}
        \overline{Y}_t = Y_0 + \int_0^t b\big( u^{m^\prime}_K(\cdot,\overline{Y}_s)(s),\nabla u^{m^\prime}_K(\cdot,\overline{Y}_s)(s)\big)ds + \sqrt{2}\,W_t
    \end{equation}
    on the same probability space and same Brownian motion driving $Y$. Therefore, by \eqref{eq:D_Esu_D_proof_ex_uniq_MKV_SDE}, $\mathcal{L}(\overline{Y})=m$.

    Since $m'$ is fixed, $\overline{Y}$ is solution of a classical SDE for which pathwise uniqueness holds. In fact, thanks to Proposition \ref{prop:stability_Feynman_Kac}, Proposition \ref{prop:stability_Feynman_Kac:gradient}, and Lemma \ref{lemma:continuity_b}, it is possible to apply Theorem 11.2 in \cite[128]{Rogers_Williams_2} which assures that there exists a pathwise unique solution $\overline{Y}$ to \eqref{eq:Y_bar}. By Yamada-Watanabe theorem, $Y'$ and $\overline{Y}$ have the same distribution. Consequently, $\Theta(m')=\mathcal{L}\left( \overline{Y} \right)=\mathcal{L}\left( Y' \right)=m'$. Since $\mathcal{L}(\overline{Y})=m$, we conclude.
\end{proof}

Having established the well-posedness of the limiting SDE, we now turn to the associated interacting particle system. Our goal is to study the convergence of the corresponding weighted empirical density towards the solution of the limiting Feynman--Kac-type equation, thereby establishing the mean-field limit.

\section{Particle system and propagation of chaos}\label{sec:propagation_chaos}
Finally, we give an interpretation of the solution to the PDE \eqref{eq:Natalini_density} as the limiting density of a particle system as the number of particles goes to infinity, and  we prove that $Y$ represents the evolution of the typical particle under the influence of the field $u_K$. From the mathematical point of view, this means that, at the microscale, we expect that any $N\in \mathbb N$ molecules positions among infinite others may be described as independent processes $Y^i=(Y^i_t)_{t\in [0,T]}$, for $i=1,\ldots,N$, with common law $m=\mathcal{L}(Y)$, where $Y$ is the solution to the SDE \eqref{eq:SDE_u_K}. Equivalently, given a family of $N$ independent Brownian motions $(W^1,\ldots,W^N)$ and a vector of $N$ independent and identically distributed random variables $(Y_0^1,\ldots, Y_0^N)$ such that $Y_0^i \sim Y_0 $ and $\mathcal{L}(Y_0)=\rho_0(\cdot)dx$, for any $i=1,\ldots,N$,  the process $ Y^i $ is the  solution of    the system
\begin{equation}\label{eq:system_particles_iid}
    \begin{split}
        Y_t^i &= Y_0^i + \int_0^t b\big(u_K\left(\cdot,Y_s^i\right)(s),\nabla u_K\left(\cdot,Y_s^i\right)(s)\big)ds + \sqrt{2}W_t^i;\\
        u_K(t,y) &= \mathbb{E}\left[ K\left(y - Y_t^i\right)\exp\left(-\lambda\,c_0 \int_0^t \exp\left( -\lambda     u^m_K(\cdot,Y^i_s)(s)  \right) ds\right) \right];\\
        m &:= \mathcal{L}\left( Y^i \right),
    \end{split}
\end{equation}
where the drift is explicitly given by \eqref{eq:drift_b_integral_form}.
System \eqref{eq:system_particles_iid} describes the typical behaviour of $N$ identical and independent molecules among infinite others, which follows a common field $u^m_K$, i.e.  the propagation of chaos property is achieved. Indeed,  let us introduce the following interacting particle system associated with \eqref{eq:system_particles_iid}
\begin{equation}\label{eq:system_particles_interacting}
\begin{split}
        \xi_t^{i,N} & = \xi_0^{i,N} + \int_0^t b\left(u^{\mu_N^\xi}\left( \cdot,\xi_s^{i,N} \right)(s),\nabla u^{\mu_N^\xi}\left( \cdot,\xi_s^{i,N} \right)(s)\right)ds + \sqrt{2}\,W_t^i\,;\\
         u^{\mu_N^\xi}(t,y) &=\frac{1}{N}\sum_{j=1}^N K\left( y - \xi_t^{i,N} \right) \exp\left(-\lambda c_0 \int_0^t \exp\left(-\lambda \int_0^s u^{\mu_N^\xi}\left( r,\xi^{i,N}_s \right) dr\right) ds\right),
         \end{split}
\end{equation}
with initial conditions $\xi_0^{i,N} = Y_0^i$. In \eqref{eq:system_particles_interacting}, $\mu_N^\xi \in \mathbb{P}(\mathcal{P}^2(C))$ is  the empirical measure of the $N$ molecules, i.e.  its marginal at time $t$, for any $t\in[0,T]$, is given by
\begin{equation}\label{eq:empirical_measure_mu_N_xi}
         \mu_N^\xi(t):= \frac{1}{N} \sum_{i=1}^N \varepsilon_{\xi^{i,N}_t}.
\end{equation} 
Let us stress that the particle system consists of interchangeable particles. Also, we refer to the second equation of \eqref{eq:system_particles_interacting} as the \emph{empirical Feynman--Kac-type equation} and to the system \eqref{eq:system_particles_interacting}  as the \emph{empirical MKFK system}.
\begin{proposition}\label{prop:estimate_d2}
    Let us define the empirical measure $ \mu_N^Y$ associated to the $N$ independent particles $(Y^1,\ldots,Y^N)$ solution of \eqref{eq:system_particles_iid} as
    \begin{equation} \label{eq:empirical_measure_Y}
        \mu_N^Y :=\frac{1}{N}\sum_{i=1}^N\varepsilon_{Y^i}\in\mathbb{P}(\mathcal{P}^2(C)).
    \end{equation}
    Then the following bound holds
    \begin{equation}\label{eq:estimate_d2}
        d_2(\mu_N^Y,m)^2 \leq\frac{1}{N},
    \end{equation}
    where the distance $d_2$ is defined in \eqref{eq:homogeneous_measure_distance}.
\end{proposition}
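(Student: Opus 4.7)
The plan is to observe that the bound is a standard iid concentration estimate for empirical measures, which follows by expanding $\langle \mu_N^Y - m, \varphi\rangle$ and exploiting independence of the $Y^i$'s.

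First, I would fix an arbitrary test function $\varphi \in \mathcal{A}$, i.e.\ $\varphi \in \mathcal{C}_b(\mathcal{C})$ with $\|\varphi\|_\infty \leq 1$. Setting $Z_i := \varphi(Y^i) - \mathbb{E}_m[\varphi(Y^1)]$, the fact that $Y^1,\ldots,Y^N$ are iid with common law $m$ implies that the $Z_i$ are iid, centered, and bounded by $2\|\varphi\|_\infty \leq 2$. By definition of the empirical measure \eqref{eq:empirical_measure_Y}, one has
\begin{equation*}
    \langle \mu_N^Y - m, \varphi \rangle \,=\, \frac{1}{N}\sum_{i=1}^N Z_i.
\end{equation*}

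Next, I would compute the second moment. By independence and centering of the $Z_i$, all cross terms vanish, yielding
\begin{equation*}
    \mathbb{E}\Big[\big|\langle \mu_N^Y - m, \varphi \rangle\big|^2\Big] \,=\, \frac{1}{N^2}\sum_{i=1}^N \mathbb{E}\big[Z_i^2\big] \,=\, \frac{\mathrm{Var}(\varphi(Y^1))}{N} \,\leq\, \frac{\mathbb{E}_m[\varphi(Y^1)^2]}{N} \,\leq\, \frac{\|\varphi\|_\infty^2}{N} \,\leq\, \frac{1}{N}.
\end{equation*}

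Finally, since the bound $1/N$ is uniform in $\varphi \in \mathcal{A}$, taking the supremum over $\varphi \in \mathcal{A}$ in the definition \eqref{eq:homogeneous_measure_distance} of $d_2$ gives $d_2^2(\mu_N^Y,m) \leq 1/N$, as claimed. No step poses any real obstacle: the argument is a one-line iid variance computation, and the essential ingredients are only the boundedness $\|\varphi\|_\infty \leq 1$ coming from the choice of the class $\mathcal{A}$ and the iid structure of the sample $(Y^i)_{i=1}^N$ granted by the construction of system \eqref{eq:system_particles_iid}.
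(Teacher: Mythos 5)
Your proof is correct and follows essentially the same route as the paper: expand $\langle\mu_N^Y-m,\varphi\rangle$ as a centred iid sum, use independence to kill the cross terms, and bound the variance by $\lVert\varphi\rVert_\infty^2\leq 1$ uniformly over $\mathcal{A}$. Your write-up merely makes explicit the centring and the intermediate bound $\mathrm{Var}(\varphi(Y^1))\leq\mathbb{E}_m[\varphi(Y^1)^2]$, which the paper leaves implicit.
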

\begin{proof}
    For the independence of the processes $Y^i$, with $i=1,...,N$, by applying   \eqref{eq:homogeneous_measure_distance}, as $\|\varphi\|\le 1$, we obtain
    \begin{equation*}
        d_2(\mu_N^Y,m)^2= \sup_{\varphi\in\mathcal{A}} \mathbb{E}\left[ \Bigg( \frac{1}{N}\sum_{j=1}^N \varphi(Y^j) - \mathbb{E}\left[ \varphi(Y^j) \right] \Bigg)^2 \right]\,\leq\, \sup_{\varphi\in\mathcal{A}} \frac{1}{N}Var\left(\varphi(Y)\right)\,\leq\,\frac{1}{N}\,.
    \end{equation*}
\end{proof}
\begin{theorem}
    For any $N\in \mathbb N$, the solution $(\xi^{1,N},\ldots, \xi^{N,N})$  of the empirical MKFK system \eqref{eq:system_particles_interacting} admits a strong solution which is pathwise unique.
\end{theorem}
\begin{proof} 
    Let us start by denoting the space $C \left([0,T],\mathbb{R}^N\right)$ with $C ^N$. The well-posedness of system \eqref{eq:system_particles_interacting} is a consequence of  Theorem 11.2  in \cite[128]{Rogers_Williams_2} for  path-dependent SDE, once the measurability and Lipschitz continuity of the  map
    \begin{equation}\label{eq:prop_wellposed_PS}
        \begin{aligned}
            \left( [0,T]\,\times\, C^N, \mathcal{B}[0,T]\otimes \mathcal{B}C^N\right) &\longrightarrow \left(\mathbb{R}^N,\mathcal{B}\mathbb{R}^N \right)\\
            \left(t,\xi\right) &\mapsto \left( u^{\mu_N^\xi}\left(t,\xi_t^1\right),\ldots,  u^{\mu_N^\xi}\left(t,\xi_t^N\right)\right)
        \end{aligned}   
    \end{equation}
    are proved. We start with the measurability of \eqref{eq:prop_wellposed_PS}. $\mu_N^\xi $ is a measurable map from $\left(\Omega,\mathcal{F}\right)$ to $\left( \mathcal{P}\left(C \right),\mathcal{B}\mathcal{P}(C) \right)$ such that $\mu_N^{\xi}\in\mathcal{P}^2\left(C \right)$ $\mathbb{P}$-a.s.. Moreover, for any $i\in\{1,...,N\}$, the map
    \begin{equation*} 
        \begin{aligned}
            \left( [0,T]\times C^{N}, \mathcal{B}[0,T]\otimes \mathcal{B}C^N\right) &\longrightarrow \left( \mathcal{P}\left(C \right)\times[0,T]\times\mathbb{R} ,\mathcal{B}\mathcal{P}(C)\otimes \mathcal{B}[0,T]\otimes \mathcal{B}\mathbb{R}\right)\\
            \left(t,\bar{\xi}\right)&\mapsto \left( \mu_N^{\bar{\xi}},t,\bar{\xi}_t^i \right)
        \end{aligned}   
    \end{equation*}
    is also continuous. Since the map $(m,t,y)\mapsto u^m_K(t,y)$ is continuous (see Proposition \ref{prop:stability_Feynman_Kac}), the map \eqref{eq:prop_wellposed_PS}, being the composition of continuous functions, is also continuous, and therefore measurable. Furthermore, the drift is clearly adapted and then progressively measurable.

   Let $\xi,\eta\in C ^N$. By  Proposition \ref{prop:stability_Feynman_Kac}, for any $t\in[0,T]$ and for any $i\in\{1,...,N\}$, we get
    \begin{eqnarray*}
        \Big| u^{\mu_N^\xi} \left(t,\xi_t^i\right)  - u^{\mu_N^\eta} \left(t,\eta_t^i\right) \Big| 
        &\leq& C_1(t) \Big[ \big|\xi_t^i - \eta_t^i \big| + D_t\left(\mu_N^\xi,\mu_N^\eta \right) \Big]\\        &\leq&\, C_1(t)\Big[ \big|\xi_t^i - \eta_t^i \big| + \frac{1}{N}\sum_{j=1}^N \sup_{s\leq t}\big| \xi_s^j - \eta_s^j \big| \Big]\\
        &\leq&\,2 C_1(t)\max_{j=1,...,N}\sup_{s\leq t}\big| \xi_s^j - \eta_s^j \big|.
    \end{eqnarray*}
    With the same procedure, one may show the same properties for the $\nabla u^{\mu_N^\xi}$. Hence, we conclude by the uniform Lipschitz continuity of $b$ (see Lemma \ref{lemma:continuity_b}) and Theorem 11.2 in \cite[128]{Rogers_Williams_2}.
\end{proof}

Given the well-posedness of the particle system, we may prove the propagation of chaos property. Let us recall the definition of such a property.

\begin{definition}\label{def:chaos_propagation}
    For any given  $N\in \mathbb N$, let $(Y^1,\ldots,Y^N)$ and $(\xi^1,\ldots,\xi^N)$ be the solutions of the systems \eqref{eq:system_particles_iid} and \eqref{eq:system_particles_interacting},  respectively. \emph{Propagation of chaos} occurs whenever for any fixed $k\ge 2$,
    \begin{equation*}
        \left( \xi^1,\xi^2,...,\xi^k \right)\xrightarrow[\mathcal{L}]{N\to \infty} \left( Y^1,Y^2,...,Y^k \right),
    \end{equation*}
    or, equivalently,
    \begin{equation*}
        \mathcal{L}\left( \xi^1,\xi^2,...,\xi^k \right)\xrightarrow{N\to \infty}\otimes_k m,
    \end{equation*}
    that is the joint law of any $k$-th dimensional process solution of \eqref{eq:system_particles_interacting} converges to the $k$-dimensional product law of $k$ independent copies of the law $m$ of the solution of \eqref{eq:system_particles_iid}.
\end{definition} 

The propagation of chaos derives from the following proposition, which provides some a priori estimates with respect to the number of particles $N\in\mathbb{N}$.
\begin{proposition}\label{prop:boundness_interactin_particles}
    Let $N\in \mathbb N$. Let $(Y^1,\ldots,Y^N)$ be the solution to the system \eqref{eq:system_particles_iid} with $u^m_K$, and $(\xi^1,\ldots,\xi^N)$ be the  solution to the system  \eqref{eq:system_particles_interacting} with $u^{\mu_N^\xi}$ associated to the empirical measure $\mu_N^\xi$ given by \eqref{eq:empirical_measure_mu_N_xi}.  
   
    Then, there exists a finite   constant $C \in \mathbb R_+$ such that, for any  $t\in [0,T]$,
    \begin{equation}\label{eq:boundness_interactin_particles}
        \mathbb{E}\left[ \lVert u^{\mu_N^\xi}(t,\cdot) - u_K(t,\cdot) \rVert_{\infty}^2 \right] + \mathbb{E}\left[ \lVert \nabla u^{\mu_N^\xi}(t,\cdot) - \nabla u_K(t,\cdot) \rVert_{\infty}^2 \right] + \sup_{i=1,...,N}\mathbb{E}\left[ \sup_{s\leq t} \Big| \xi_s^i - Y_s^i \Big|^2 \right]\leq \frac{C}{N},
    \end{equation} 
    where $C=C\left( M_K,L_K,T,\lambda,c_0 \right)$. Furthermore, if $\mathcal{F}(K)\in W^{1,2}\left( \mathbb{R} \right)$, there exists a finite positive constant $C$ such that, for all $t\in [0,T]$,
    \begin{equation}\label{eq:boundness_interactin_particles2}
        \mathbb{E}\left[ \lVert u^{ \mu_N^\xi}(t,\cdot) - u_K(t,\cdot) \rVert_2^2 \right] \,\leq\, \frac{C}{N}\,,
    \end{equation}
    where $C=C\left( M_K,L_K,T,\lVert \nabla K\rVert_2,\lambda,c_0 \right)$.
\end{proposition}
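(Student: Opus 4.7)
The plan is a classical coupling argument: I assume both systems are driven by the same initial data $Y_0^i$ and Brownian motions $W^i$, and I introduce the auxiliary empirical measure $\mu_N^Y := \frac{1}{N}\sum_i \varepsilon_{Y^i}$ associated to the i.i.d.\ copies. For each fixed $t\in[0,T]$, the triangle inequality gives the splitting
\begin{equation*}
u^{\mu_N^\xi}(t,\cdot) - u^m(t,\cdot) = \bigl[u^{\mu_N^\xi}(t,\cdot) - u^{\mu_N^Y}(t,\cdot)\bigr] + \bigl[u^{\mu_N^Y}(t,\cdot) - u^m(t,\cdot)\bigr],
\end{equation*}
and analogously for $\nabla u$. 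The second bracket, comparing empirical versus theoretical measures, is controlled directly by Propositions \ref{prop:stability_3} and \ref{prop:stability_3:gradient} combined with Proposition \ref{prop:estimate_d2}, producing an $O(1/N)$ bound in expectation. For the first bracket, choosing $y=y'$ in Propositions \ref{prop:stability_Feynman_Kac} and \ref{prop:stability_Feynman_Kac:gradient} gives pointwise-in-$y$ bounds by $D_t^2(\mu_N^\xi,\mu_N^Y)$; the synchronous coupling pairing $\xi^i$ with $Y^i$ together with the exchangeability of the particles then yields a bound by $C_1(t)\,\sup_i\mathbb{E}\bigl[\sup_{s\leq t}|\xi_s^i - Y_s^i|^2\bigr]$.

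Next, I would subtract \eqref{eq:system_particles_interacting} from \eqref{eq:system_particles_iid}, apply Cauchy--Schwarz in time, and invoke the Lipschitz continuity of $b$ from Proposition \ref{prop:continuity_b} to obtain
\begin{equation*}
\mathbb{E}\Bigl[\sup_{r\leq t}|\xi_r^i - Y_r^i|^2\Bigr] \leq C\,t\int_0^t \mathbb{E}\Bigl[\bigl|u^{\mu_N^\xi}(\cdot,\xi^i_\cdot)(s) - u^m(\cdot,Y^i_\cdot)(s)\bigr|^2 + \bigl|\nabla u^{\mu_N^\xi}(\cdot,\xi^i_\cdot)(s) - \nabla u^m(\cdot,Y^i_\cdot)(s)\bigr|^2\Bigr]\,ds.
\end{equation*}
The integrand is further decomposed by adding and subtracting $u^m(r,\xi_r^i)$ (resp.\ $\nabla u^m(r,\xi_r^i)$): the measure-shift term is controlled by $\lVert u^{\mu_N^\xi}(r,\cdot) - u^m(r,\cdot)\rVert_\infty$, and the spatial-shift term by $L_K|\xi_r^i - Y_r^i|$ via the spatial Lipschitz property of $u^m$ proved just after Proposition \ref{prop:existence_u^m} (the gradient version is obtained by the same argument since $\nabla K$ is also bounded and Lipschitz under our hypotheses on $K$).

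Setting $\Phi_N(t)$ equal to the sum of the three quantities appearing in \eqref{eq:boundness_interactin_particles}, combining the estimates above produces an integral inequality of the form $\Phi_N(t) \leq K_1/N + K_2\int_0^t \Phi_N(s)\,ds$, and Gronwall's lemma then delivers \eqref{eq:boundness_interactin_particles}. For the $L^2$-bound \eqref{eq:boundness_interactin_particles2}, the same three-way splitting is reused: Proposition \ref{eq:stability_2} handles the empirical-vs-empirical piece, while the empirical-vs-law piece requires the $L^2$ counterpart of Proposition \ref{prop:stability_3}, whose Fourier proof needs the strengthened assumption $\mathcal{F}(K)\in W^{1,2}(\mathbb{R})$ so that Plancherel converts $d_2^2(\mu_N^Y,m)$ into an $\lVert\cdot\rVert_2^2$ bound.

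The main obstacle I anticipate is the path-dependence of the drift: the arguments of $b$ in \eqref{eq:system_particles_iid}--\eqref{eq:system_particles_interacting} are time integrals of $u^m$ and $\nabla u^m$ \emph{along} the trajectories $Y^i$ and $\xi^i$ themselves, so the comparison at time $t$ simultaneously mixes a spatial-shift contribution (the two trajectories occupy different positions) with a measure-shift contribution (they are driven by different laws). This forces the inner split described in the second step and is the reason why the three quantities in \eqref{eq:boundness_interactin_particles} must be bounded together through a single Gronwall loop rather than iteratively.
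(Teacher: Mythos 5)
Your proof of \eqref{eq:boundness_interactin_particles} is correct and follows essentially the same route as the paper: the same synchronous coupling, the same three-way splitting through the auxiliary empirical measure $\mu_N^Y$, the same use of Propositions \ref{prop:stability_Feynman_Kac}, \ref{prop:stability_Feynman_Kac:gradient}, \ref{prop:stability_3}, \ref{prop:stability_3:gradient} and \ref{prop:estimate_d2}, the same add-and-subtract of $u^m(r,\xi_r^i)$ inside the drift comparison, and a single Gronwall loop on the sum of the three quantities (the paper writes the double time integral and applies Gronwall twice, which is equivalent to your single-integral formulation after bounding the inner integral by $T$ times the outer one).

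The one place where you genuinely diverge is the $L^2$ estimate \eqref{eq:boundness_interactin_particles2}, specifically the empirical-versus-law term $\mathbb{E}\left[\lVert u_t^{\mu_N^Y}-u_t^m\rVert_2^2\right]$. You propose an $L^2$ (Plancherel) analogue of Proposition \ref{prop:stability_3}, converting $d_2^2(\mu_N^Y,m)$ into an $\lVert\cdot\rVert_2^2$ bound via $\int_{\mathbb{R}}|\mathcal{F}(K)(\xi)|^2\,d\xi$. The paper instead writes $u_t^{\mu_N^Y}-u_t^m=A_t+B_t$, where $A_t$ collects the difference of the Feynman--Ka\v{c} weights $V_t$ (controlled by the already-established sup-norm bound together with $\int_{\mathbb{R}}K=1$ and $K\leq M_K$), and $B_t$ is a centered sum of i.i.d.\ terms whose variance is computed directly, as in Theorem 3.1 of \cite{2016b_Russo}. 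Both routes deliver $O(1/N)$; your Fourier route is arguably more uniform with the sup-norm case and only really needs $\mathcal{F}(K)\in L^2$ for the piece you describe (you should still split off the $V^{\mu_N^Y}-V^m$ contribution before invoking the $d_2$ bound, exactly as in the proof of Proposition \ref{prop:stability_3}, since the test function $e^{-i\xi X_t}V_t^{\mu_N^Y}$ is itself random), while the paper's variance computation avoids Fourier analysis altogether for the fluctuation term and exploits the probabilistic independence more directly. Neither difference affects the validity of the statement or the $1/N$ rate.
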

\begin{remark}
    The convergence analysis must also include the gradient of the empirical Feynman--Kac density, since it appears explicitly in the drift coefficients of both the interacting particle system and the associated limiting SDE. This constitutes a further difference with respect to the framework considered in \cite{2016_Russo}, where convergence of the density alone was sufficient.
\end{remark}
\begin{proof}
    Let us recall that the map \eqref{eq:prop_wellposed_PS} is measurable and  $\left( u^{\mu_N^\xi}\left(t,\xi_t^1\right),\ldots,  u^{\mu_N^\xi}\left(t,\xi_t^N\right)\right)$ satisfies the non-anticipating property by Proposition \ref{prop:non_anticipating}. For any $i\in\{1,...,N\}$, by the same computations as in \eqref{eq:diff_Y}, Proposition \ref{prop:stability_Feynman_Kac}, and Proposition \ref{prop:stability_Feynman_Kac:gradient}, we get
    \begin{align*}
        &\mathbb{E}\left[ \sup_{0\leq s\leq t} \big| \xi_s^i - Y_s^i \big|^2 \right]\\
        &\leq2C(T)\mathbb{E}\left[ \int_0^t \int_0^s \left( \Big| u^{\mu_N^\xi}(r,\xi_s^i) - u_K(r,Y_s^i) \Big|^2 + \Big| \nabla u^{\mu_N^\xi}(r,\xi_s^i) - u_K(r,Y_s^i) \Big|^2 \right)drds \right]\\
        &\begin{aligned}
            \,\,\leq\,&4C(T)\mathbb{E}\left[ \int_0^t \int_0^s \left(\Big| u^{\mu_N^\xi}(r,\xi_s^i) - u_K(r,\xi_s^i) \Big|^2 + \Big| \nabla u^{\mu_N^\xi}(r,\xi_s^i) - \nabla u_K(r,\xi_s^i) \Big|^2\right)drds \right]\\
            &+4C(T)\mathbb{E}\left[ \int_0^t\int_0^s \left(\Big| u_K(r,\xi_s^i) -  u_K(r,Y_s^i) \Big|^2 + \Big| \nabla u_K(r,\xi_s^i) -  \nabla u_K(r,Y_s^i) \Big|^2\right)drds \right]
        \end{aligned}\\
        &\begin{aligned}
            \,\,\leq\,&4C(T)\int_0^t \int_0^s\mathbb{E}\left[\left(\Big| u^{\mu_N^\xi}(r,\xi_s^i) - u_K(r,\xi_s^i) \Big|^2 + \Big| \nabla u^{\mu_N^\xi}(r,\xi_s^i) - \nabla u_K(r,\xi_s^i) \Big|^2\right)\right]drds\\
            &+8TC(T)\int_0^t \sup_{i=1,\dots,N}\mathbb{E}\left[ \sup_{u\leq s} \big| \xi_u^i - Y_u^i \big|^2 \right]ds,
        \end{aligned}
    \end{align*} 
    where in the above inequalities and also in the following, the map $t\mapsto C(t)$ does not depend on $N$ and may change from line to line; however, it is always increasing w.r.t. its argument. Then,
    \begin{equation}\label{eq:proof_boundness_interactin_particles_sup}
        \begin{aligned}
            &\sup_{i=1,\dots,N}\mathbb{E}\left[ \sup_{0\leq s\leq t} \big| \xi_s^i - Y_s^i \big|^2 \right]\\
            &\begin{aligned}
                \,\,\leq\,&4TC(T)\int_0^t \sup_{r\leq s}\mathbb{E}\left[\left(\Big\lVert u^{\mu_N^\xi}(r,\cdot) - u_K(r,\cdot) \Big\rVert_{\infty}^2 + \Big\rVert \nabla u^{\mu_N^\xi}(r,\cdot) - \nabla u_K(r,\cdot) \Big\rVert_{\infty}^2\right)\right]ds\\
                &+8TC(T)\int_0^t \sup_{i=1,\dots,N}\mathbb{E}\left[ \sup_{u\leq s} \big| \xi_u^i - Y_u^i \big|^2 \right]ds.
            \end{aligned}
        \end{aligned}
    \end{equation}
    
    We apply Proposition \ref{prop:stability_Feynman_Kac} pathwise with $m=\mu_N^\xi(\bar{\omega})$ and $m'=\mu_N^Y(\bar{\omega})$,   Proposition \ref{prop:stability_3} with the random measures $\eta = \mu_N^Y$ and $m$, and we notice that  $\frac{1}{N}\sum_{i=1}^N\delta_{(\xi^i,Y^i)}$ is a coupling for $\mu_N$ and $\mu_N^Y$. Then, for every $t\in[0,T]$,
    \begin{align*}
        \mathbb{E}\left[ \lVert u^{\mu_N^\xi}(t,\cdot) - u_K(t,\cdot) \rVert_{\infty}^2 \right] 
        &\leq2\mathbb{E}\left[ \lVert u^{\mu_N^\xi}(t,\cdot) - u^{\mu_N^Y}(t,\cdot)\rVert_{\infty}^2\right] + 2\mathbb{E}\left[ \lVert u^{\mu_N^Y}(t,\cdot) - u_K(t,\cdot) \rVert_{\infty}^2\right]\\
        &\leq2C_1(t)D_t^2\left( \mu_N^\xi,\mu_N^Y \right) + 2C_3(t)d_2(\mu_N^Y,m)^2\\
        &\leq\frac{2C_1(t)}{N}\sum_{i=1}^N\mathbb{E}\left[ \sup_{s\leq t} \big| \xi_s^i - Y_s^i \big|^2 \right] + 2C_3(t) d_2(\mu_N^Y,m)^2\\
        &\leq2C_1(t)\sup_{i=1,...,N}\mathbb{E}\left[ \sup_{s\leq t} \big| \xi_s^i - Y_s^i \big|^2 \right] + 2C_3(t)d_2(\mu_N^Y,m)^2,
    \end{align*}
    where the homogeneous measure $d_2$ is defined in \eqref{eq:homogeneous_measure_distance}. Also, since $\mu_N^Y\in\mathbb{P}(\mathcal{P}^2(C))$ and $m\in\mathcal{P}^2\left(C \right)$ (as proved in Theorem \ref{thm:existence_uniqueness_SDE}), the above inequalities make sense. By Proposition \ref{prop:stability_Feynman_Kac:gradient} and Proposition \ref{prop:stability_3:gradient}, the same estimate holds for the second term in the right-hand side of \eqref{eq:proof_boundness_interactin_particles_sup}. 
    
    Therefore, by \eqref{eq:proof_boundness_interactin_particles_sup}
    \begin{align*}
        &\mathbb{E}\left[ \lVert u^{\mu_N^\xi}(t,\cdot) - u_K(t,\cdot) \rVert_{\infty}^2 \right] + \mathbb{E}\left[ \lVert \nabla u^{\mu_N^\xi}(t,\cdot) - \nabla u_K(t,\cdot) \rVert_{\infty}^2 \right] + \sup_{i=1,...,N}\mathbb{E}\left[ \sup_{s\leq t} \Big| \xi_s^i - Y_s^i \Big|^2 \right]\\
        &\leq\left( 2C_1(t) + 2\widetilde{C}_1(t) + 1 \right)\sup_{i=1,...,N}\mathbb{E}\left[ \sup_{s\leq t} \big| \xi_s^i - Y_s^i \big|^2 \right] +\, \left(2C_3(t)+2\widetilde{C}_3(t)\right)d_2(\mu_N^Y,m)^2\\
        &\begin{aligned}
            \,\,\leq\,&\overline{C}_1(t)\int_0^t \sup_{r\leq s}\mathbb{E}\left[\left(\Big\lVert u^{\mu_N^\xi}(r,\cdot) - u_K(r,\cdot) \Big\rVert_{\infty}^2 + \Big\rVert \nabla u^{\mu_N^\xi}(r,\cdot) - \nabla u_K(r,\cdot) \Big\rVert_{\infty}^2\right)\right]ds\\
            &+\overline{C}_1(t)\int_0^t \sup_{i=1,\dots,N}\mathbb{E}\left[ \sup_{u\leq s} \big| \xi_u^i - Y_u^i \big|^2 \right]ds +\left(2C_3(t)+2\widetilde{C}_3(t)\right)\,d_2(\mu_N^Y,m)^2,
        \end{aligned} 
    \end{align*}
    where $\overline{C}_1(t):=\left( 2C_1(t) + 2\widetilde{C}_1(t) + 1 \right)8TC(T)$ and $t\mapsto C(t)$ is an increasing map. Thus,
    \begin{align*}
        &\sup_{t\leq T}\left(\mathbb{E}\left[ \lVert u^{\mu_N^\xi}(t,\cdot) - u_K(t,\cdot) \rVert_{\infty}^2 \right] + \mathbb{E}\left[ \lVert \nabla u^{\mu_N^\xi}(t,\cdot) - \nabla u_K(t,\cdot) \rVert_{\infty}^2 \right] + \sup_{i=1,...,N}\mathbb{E}\left[ \sup_{s\leq t} \Big| \xi_s^i - Y_s^i \Big|^2 \right]\right)\\
        &\begin{aligned}
            \,\,\leq\,&\overline{C}_1(T)\int_0^T \sup_{r\leq s}\mathbb{E}\left[\left(\Big\lVert u^{\mu_N^\xi}(r,\cdot) - u_K(r,\cdot) \Big\rVert_{\infty}^2 + \Big\rVert \nabla u^{\mu_N^\xi}(r,\cdot) - \nabla u_K(r,\cdot) \Big\rVert_{\infty}^2\right)\right]ds\\
            &+\overline{C}_1(T)\int_0^T \sup_{i=1,\dots,N}\mathbb{E}\left[ \sup_{u\leq s} \big| \xi_u^i - Y_u^i \big|^2 \right]ds +\left(2C_3(T)+2\widetilde{C}_3(T)\right)\,d_2(\mu_N^Y,m)^2.
        \end{aligned} 
    \end{align*}
    By Gronwall's lemma, having taken the supremum, for every $t\in[0,T]$,
    \begin{equation*} 
        \begin{aligned}
            &\mathbb{E}\left[ \lVert u^{\mu_N^\xi}(t,\cdot) - u_K(t,\cdot) \rVert_{\infty}^2 \right] +\mathbb{E}\left[ \lVert \nabla u^{\mu_N^\xi}(t,\cdot) - \nabla u_K(t,\cdot) \rVert_{\infty}^2 \right] + \sup_{i=1,...,N}\mathbb{E}\left[ \sup_{s\leq t} \Big| \xi_s^i - Y_s^i \Big|^2 \right]\\
            &\leq C(T) d_2(\mu_N^Y,m)^2.
        \end{aligned}
    \end{equation*}
    From \eqref{eq:estimate_d2}, inequality \eqref{eq:boundness_interactin_particles} is proved.
   
    Let us turn to prove inequality (\ref{eq:boundness_interactin_particles2}).
      For every $t\in[0,T]$, we can write
    \begin{equation}\label{eq:boundness_interacting_particles_L2} 
        \mathbb{E}\left[ \lVert u^{\mu_N^\xi}(t,\cdot) - u_K(t,\cdot) \rVert_2^2 \right] \leq 2\mathbb{E}\left[ \lVert u^{\mu_N^\xi}(t,\cdot) - u^{\mu_N^Y}(t,\cdot) \rVert_2^2 \right] + 2\mathbb{E}\left[ \lVert u^{\mu_N^Y}(t,\cdot) - u_K(t,\cdot) \rVert_2^2 \right].
    \end{equation}

    From  Proposition \ref{eq:stability_2} and equation \eqref{eq:boundness_interactin_particles}, for every $t\in [0,T]$, we have that
    \begin{align*}
        \mathbb{E}\left[ \lVert u^{\mu_N^\xi}(t,\cdot) - u^{\mu_N^Y}(t,\cdot) \rVert_2^2 \right] &\leq C \mathbb{E}\left[D_t\left( \mu_N,\mu_N^Y \right)^2\right] \leq \frac{C}{N}\sum_{j=1}^N \mathbb{E}\left[ \sup_{0\leq r\leq t} \big| \xi_r^j - Y_r^j \big|^2 \right]\leq\frac{C}{N}. 
    \end{align*}

    As the second term on the right-hand side of \eqref{eq:boundness_interacting_particles_L2} regards, fixed $i\in\{1,...,N\}$, by Cauchy-Schwartz inequality, for all $t\in[0,T]$,
    \begin{equation}\label{eq:boundness_interacting_particles_L2proof_1} 
        \mathbb{E}\left[ \lVert u^{\mu_N^Y}(t,\cdot) - u_K(t,\cdot) \rVert_2^2 \right] \leq 2\mathbb{E}\left[ \lVert A(t,\cdot) \rVert_2^2 \right] + 2\mathbb{E}\left[ \lVert B(t,\cdot) \rVert_2^2 \right],
    \end{equation}
    where 
    \begin{eqnarray*}
        A(t,x) &:=&\frac{1}{N}\sum_{j=1}^N K\left(x-Y_t^j\right)\left[ V_t\left( Y^j,u^{\mu_N^Y}(Y^j) \right) - V_t\left( Y^j,u_K(Y^j) \right) \right]\,,\\
        B(t,x) &:=&\frac{1}{N}\sum_{j=1}^N K\left( x - Y_t^j \right) V_t\left( Y^j,u_K(Y^j) \right) - \mathbb{E}\left[ K\left( x -Y_t^1 \right) 
        V_t\left( Y^1,u_K(Y^1) \right) \right].
    \end{eqnarray*} 

    Now, on the one hand, from Lemma \ref{Lemma:V_continuity},
    \begin{align*}
        \big| A(t,x) \big|^2 &\leq \frac{1}{N}\sum_{j=1}^N K\left(x - Y_t^j\right)^2 \left[ V_t\left( Y^j,u^{\mu_N^Y} \right) - V_t\left( Y^j,u_K \right) \right]^2 \\
        &\leq \frac{M_K T}{N} \left( \lambda^2 c_0 \right)^2 \sum_{j=1}^N K(x - Y_t^j) \int_0^t \int_0^s \big| u^{\mu_N^Y}(r,Y_s^j) - u_K(r,Y_s^j) \big|^2 drds \\
        &\leq \frac{M_K T}{N} \left( \lambda^2 c_0 \right)^2 \sum_{j=1}^N K(x - Y_t^j) \int_0^t \int_0^s \big\lVert u^{\mu_N^Y}(r,\cdot) - u_K(r,\cdot) \big\rVert_{\infty}^2 drds.
    \end{align*}
    
    From  Proposition \ref{prop:stability_3} and \eqref{eq:estimate_d2}, we get
    \begin{align*}
        \mathbb{E}\left[ \int_{\mathbb{R}} \big| A(t,x) \big|^2\,dx \right] &\leq M_K T \left( \lambda^2 c_0 \right)^2 \int_0^t\int_0^s \mathbb{E}\left[ \lVert u^{\mu_N^Y}(r,\cdot) - u_K(r,\cdot) \rVert_{\infty}^2 \right]drds\\\
        &\leq\, M_K T^3 \left( \lambda^2 c_0 \right)^2 C_3(t)  d_2(\mu_N^Y,m)^2  \\
        &\leq\, \frac{M_K T^3 \left( \lambda^3 c_0^2 \right)^2 C_3(t)}{N}.
    \end{align*} 

    On the other hand, from the proof of \cite[Theorem 3.1]{2016b_Russo}, thanks to Lemma \ref{Lemma:V_continuity} and the assumption of $K$ being bounded by $M_K$, we have that
    \begin{equation*}
        \mathbb{E}\left[ B(t,x)^2 \right] \,\leq\, \frac{1}{N}\mathbb{E}\left[ K\left( x - Y_t^1 \right)^2 V_t\left( Y^1,u_K(Y^1) \right)^2 \right] \leq \frac{M_K}{N}\mathbb{E}\left[K\left( x - Y_t^1 \right)\right]\,.
    \end{equation*}
    
    Integrating both sides of the above inequality w.t.r. $x\in\mathbb{R}$, we obtain
    \begin{equation*} 
        \mathbb{E}\left[ \int_{\mathbb{R}} B(t,x)^2\,dx \right] \,\leq\, \frac{M_K}{N}\mathbb{E}\left[\int_{\mathbb{R}} K\left( x - Y_t^1 \right)\,dx\right] = \frac{M_K}{N}.
    \end{equation*} 
    Then, inequality \eqref{eq:boundness_interacting_particles_L2proof_1}  becomes
    \begin{equation}\label{chap4_theo_prop_chaos_Natalini_res18}
        \mathbb{E}\left[ \lVert u^{\mu_N^Y}(t,\cdot) - u_K(t,\cdot) \rVert_2^2 \right] \,\leq\, 2\,\frac{M_K}{N} + 2\,\frac{M_K T^3 \left( \lambda^3 c_0^2 \right)^2 C_3(t)}{N}.
    \end{equation}
  Therefore, from \eqref{eq:boundness_interacting_particles_L2} we get
    \begin{equation*}
        \mathbb{E}\left[ \lVert u^{\mu_N^\xi}(t,\cdot) - u_K(t,\cdot) \rVert_2^2 \right] \,\leq\, 2\,\frac{C}{N} + 4\,\frac{M_K}{N} + 4\,\frac{M_K T^3 \left( \lambda^3 c_0^2 \right)^2 C_3(t)}{N}
    \end{equation*}
  and we conclude.
\end{proof}

Proposition \ref{prop:boundness_interactin_particles} entails the propagation of chaos. In fact, it implies the following result.
\begin{theorem}
   Let $N\in \mathbb N$. Let $(Y^1,\ldots,Y^N)$ be the solution to the system \eqref{eq:system_particles_iid} with $u^m_K$, and $(\xi^1,\ldots,\xi^N)$ be the solution to the system  \eqref{eq:system_particles_interacting} with $u^{\mu_N^\xi}$ associated to the empirical measure $\mu_N^\xi$ given by \eqref{eq:empirical_measure_mu_N_xi}. Then propagation of chaos occurs, according with Definition \ref{def:chaos_propagation}.
\end{theorem}
\begin{proof}
    For Proposition \ref{prop:boundness_interactin_particles}, we have that, said $L^2=L^2\left( \Omega,\mathcal{F},\mathbb{P} \right)$,\begin{equation*}
        \left( \xi^{1,N}- Y^1\,,\; \xi^{2,N}- Y^2\,,\; \ldots, \xi^{k,N}- Y^k  \right) \xrightarrow[L^2]{N\to \infty}0.
    \end{equation*}
    In particular, this implies the convergence in law of $\left( \xi^{1,N} , \ldots,  \xi^{k,N} \right)$ to the random vector $ \left(  Y^1   \ldots,   Y^k  \right)$.
    The propagation of chaos property is established from the independence and the identical distribution of the  $\{Y^i\}_{i=1}^N$.
\end{proof}

This result rigorously justifies the microscopic interpretation introduced above. Indeed, it shows that, in the limit as $N\rightarrow\infty$, the interacting particle system behaves asymptotically as a family of independent copies of the nonlinear process $Y$. In particular, the empirical (weighted) distribution of the particle system converges to the regularised solution of the limiting McKean--Feynman--Kac equation.

\appendix
\section{Appendix}\label{appendix}
This appendix provides the proofs of several technical results employed in the previous sections.
\subsection{Proof of Proposition \ref{prop:stability_Feynman_Kac}}\label{append_prop_eq:stability_1}
    Let $r,s,t\in[0,T]$, $r<s<t$. Fix two measures $m,m'\in\mathcal{P}^2\left(C \right)$ and $y,y'\in\mathbb{R}$. In the following, $u^m_K$ and $u^{m^\prime}_K$ will denote the respective solutions of \eqref{eq:Feynman_Kac_generic_measure}.  

    Since $(a+b)^2\le 2a^2+2b^2$, we have that
    \begin{equation}\label{eq:AppendixA_diff^2}
        \big| u^m_K(t,y) - u^{m^\prime}_K(t,y') \big|^2 \,\leq\, 2\big| u^m_K(t,y) - u^m_K(t,y') \big|^2 + 2\big| u^m_K(t,y') - u^{m^\prime}_K(t,y') \big|^2\,.
    \end{equation}
   By i) of Lemma \ref{Lemma:V_continuity} and assumption \eqref{eq:K_LK},
    \begin{equation*}
        \begin{aligned}
            &\big| u^m_K(t,y) - u^m_K(t,y') \big|\\
            &=\Bigg| \int_{C }K\big( y - X_t(\omega) \big)V_t\big(u(\cdot,X_\cdot(\omega))\big)m(d\omega) - \int_{C }K\big( y' - X_t(\omega) \big)V_t\big(u(\cdot,X_\cdot(\omega))\big)m(d\omega) \Bigg|\\
            &\leq\, \int_{C }\Big| K\big( y - X_t(\omega) \big) - K\big( y' - X_t(\omega) \big) \Big|\cdot\Big| V_t\big(u(\cdot,X_\cdot(\omega))\big) \Big|m(d\omega)\leq\, L_K\big| y-y' \big|.
        \end{aligned}
    \end{equation*}
    As for the second term in \eqref{eq:AppendixA_diff^2}, given $\pi\in\Pi(m,m')$, we get
    \begin{equation}\label{eq:appendixA_2nd_term}
        \begin{aligned}
            &\big| u^m_K(t,y') - u^{m^\prime}_K(t,y') \big|\\
            &\leq \int_{C \times C }\Big| K\big( y' - X_t(\omega) \big)V_t\big(u(\cdot,X_\cdot(\omega))\big)- K\big( y' - X_t(\omega') \big)V_t\big(u(\cdot,X_\cdot(\omega'))\big) \Big|\pi\left(d\omega,d\omega'\right).
        \end{aligned}
    \end{equation}
   Let  $x,x'\in C $, and $z,z^\prime\in C ^+$. By Lemma \ref{Lemma:V_continuity} and  $K$ being bounded,
    \begin{equation*}
        \begin{aligned}
            &\Big| K(y'-x_t)V_t(z) - K(y'-x_t')V_t(z^\prime) \Big|\\
            &\leq\Big| K(y'-x_t) - K(y'-x_t') \Big|\cdot\Big|V_t(z)\Big|+\Big| V_t(z) - V_t(z^\prime)\Big|\cdot\Big|K(y'-x_t')\Big|\\
            &\leq\,L_K\big|x_t - x_t'\big| + \lambda^2c_0M_K\int_0^t\int_0^s\big|z_r - z_r'\big|drds.
        \end{aligned}
    \end{equation*}
  Therefore, 
    \begin{align}
        &\big| u^m_K(t,y) - u^{m^\prime}_K(t,y^\prime) \big|^2\notag&&\\
        &\begin{aligned}
            \,\,\leq\, &2L_K^2\big| y - y' \big|^2 + 4L_K^2 \int_{C \times C } \sup_{s\leq t} \Big| X_s(\omega) - X_s(\omega') \Big|^2\pi\left(d\omega,d\omega'\right)\\
            &+4\left(\lambda^2c_0M_K\right)^2 \int_{C \times C }\int_0^t\int_0^s \Big| u(r,X_s(\omega)) - u(r,X_s(\omega')) \Big|^2dr ds\pi\left(d\omega,d\omega'\right)
        \end{aligned}\notag&&\\
        &\begin{aligned}
            \,\,\leq\, 2C\Bigg(&\big| y - y' \big|^2 + \int_{C \times C } \sup_{s\leq t} \Big| X_s(\omega) - X_s(\omega') \Big|^2\pi\left(d\omega,d\omega'\right)\\
            &+ \int_{C \times C }\int_0^t\sup_{r\leq s} \Big| u(r,X_s(\omega)) - u(r,X_s(\omega')) \Big|^2 ds\pi\left(d\omega,d\omega'\right)\Bigg),
        \end{aligned}\label{res1_primo_risultato_stability_u_m_Natalini}&&
    \end{align}
    where $C:=2L_K^2 + 2T(\lambda^2c_0M_K)^2$.
    
    To estimate the third term in the right-hand side of \eqref{res1_primo_risultato_stability_u_m_Natalini},
     we use inequality \eqref{res1_primo_risultato_stability_u_m_Natalini} with $r$ instead of $t$, $y=X_s(\omega)$, and $y'=X_s(\omega')$. That is, 
    \begin{align}
        &\sup_{r\leq s}\big| u^m_K\left(r,X_s(\omega)\right) - u^{m^\prime}_K\left(r,X_s(\omega')\right) \big|^2\notag&&\\
        &\begin{aligned}
           \leq 2C\Bigg(&\sup_{s\leq t}\Big| X_s(\omega) - X_s(\omega') \Big|^2 + \int_{C \times C } \sup_{s\leq t} \Big| X_s(\omega) - X_s(\omega') \Big|^2\pi\left(d\omega,d\omega'\right)\\
            &+ \int_{C \times C }\int_0^s\sup_{\tau\leq r} \Big| u(\tau,X_r(\omega)) - u(\tau,X_r(\omega')) \Big|^2 dr\pi\left(d\omega,d\omega'\right)\Bigg).
        \end{aligned}\label{res2_primo_risultato_stability_u_m_Natalini}&&
    \end{align}
    By integrating both sides of \eqref{res2_primo_risultato_stability_u_m_Natalini} over $C \times C $ w.r.t. $\pi$ and over $[0,t]$ w.r.t. $s$,  we get
    \begin{align*}
        &\int_{C\times C}\int_0^t\sup_{r\leq s}\big| u^m_K\left(r,X_s(\omega)\right) - u^{m^\prime}_K\left(r,X_s(\omega')\right) \big|^2\pi(d\omega,d\omega^\prime)&&\\
        &\begin{aligned}
           \leq 2C\Bigg(&2T\int_{C \times C } \sup_{s\leq t} \Big| X_s(\omega) - X_s(\omega') \Big|^2\pi\left(d\omega,d\omega'\right)\\
            &+ \int_{C \times C }\int_0^t\int_0^s\sup_{\tau\leq r} \Big| u(\tau,X_r(\omega)) - u(\tau,X_r(\omega')) \Big|^2 dr\pi\left(d\omega,d\omega'\right)\Bigg).
        \end{aligned}\label{res2_primo_risultato_stability_u_m_Natalini}&&
    \end{align*}
    By Gronwall's lemma,
    \begin{equation}\label{res3_primo_risultato_stability_u_m_Natalini}
        \begin{aligned}
            &\int_{C\times C}\int_0^t\sup_{r\leq s}\big| u^m_K\left(r,X_s(\omega)\right) - u^{m^\prime}_K\left(r,X_s(\omega')\right) \big|^2\pi(d\omega,d\omega^\prime)\\
            &\leq 4CTe^{2CTt}\int_{C \times C }\sup_{s\leq t}\Big| X_s(\omega) - X_s(\omega') \Big|^2\pi\left(d\omega,d\omega'\right).
        \end{aligned}
    \end{equation}
    Substituting \eqref{res3_primo_risultato_stability_u_m_Natalini} in \eqref{res1_primo_risultato_stability_u_m_Natalini}, we obtain
    \begin{align*}
        &\big| u^m_K(t,y) - u^{m^\prime}_K(t,y) \big|^2&&\\
        &\leq\, 8C^2Te^{2CTt}\Bigg(\big| y - y' \big|^2 + \int_{C \times C } \sup_{s\leq t} \Big| X_s(\omega) - X_s(\omega') \Big|^2\pi\left(d\omega,d\omega'\right)\Bigg).&&
    \end{align*}

    Taking the infimum over $\Pi(m,m')$, we get
    \begin{equation*}
        \big| u^m_K(t,y) - u^{m^\prime}_K(t,y) \big|^2 \,\leq\, C_1(t)\cdot\left(\big|y - y'\big|^2 + D_t(m,m')^2\right),
    \end{equation*}
    where $C_1(t):=8C^2Te^{2CTt}$ and $C:=2L_K^2 + 2(\lambda^2c_0M_K)^2$, and we conclude.
    
    This result implies that the application is continuous with respect to $(m,y)$ uniformly with respect to time. Therefore, it remains to show that the map $t\mapsto u^m_K(t,y)$ is continuous for fixed $(m,y)\in\mathcal{P}\left(C \right)\times\mathbb{R}$. By Lemma \ref{Lemma:Lambda} and Lemma \ref{Lemma:V_continuity}, the second statement of the proposition can be proved using the same arguments as in \cite[Proposition 3.2]{2016_Russo}.

\subsection{Proof of Proposition \ref{eq:stability_2}}\label{append_prop_eq:stability_2}
    Let $m,m'\in\mathcal{P}^2\left( C  \right)$. Since $K\in L^2\left(\mathbb{R}\right)$ and $V$ is bounded (see Lemma \ref{Lemma:V_continuity}), using Jensen's inequality, it is immediate to show that the maps $x\mapsto u^m_K(r,x)$ and $x\mapsto u^{m^\prime}_K(r,x)$ belong to $L^2\left(\mathbb{R}\right)$, for every $r\in[0,T]$. Therefore, for any $\pi\in\Pi(m,m')$, we can compute
    \begin{align}
        \lVert u^m_K(t,\cdot) - u^{m^\prime}_K(t,\cdot) \rVert_2^2
        &= \int_{\mathbb{R}} |u^m_K(t,y) - u^{m^\prime}_K(t,y)|^2 \, dy \notag\\
        &\begin{aligned}[t]
           \leq \int_{C \times C} \int_{\mathbb{R}} 
           \Big| &K\!\left( y - X_t(\omega) \right) V_t\!\left( u^m_K(X(\omega)) \right) \\
                 &- K\!\left( y - X_t(\omega') \right) V_t\!\left( u^{m^\prime}_K(X(\omega')) \right) \Big|^2 \,
           dy \, \pi\!\left(d\omega, d\omega'\right).
        \end{aligned}\label{eq:proof_stability1}
    \end{align}

    By Lemma \ref{Lemma:V_continuity} and Definition \ref{def:kernel} of the mollifier $K$, for any $x,x'\in C $ and $z,z^\prime\in C ^+$,
    \begin{align*}
        &\int_{\mathbb{R}} \Big| K(y-x_t)V_t(z) - K(y-x_t')V_t(z^\prime) \Big|^2dy\\
        &\leq 2\int_{\mathbb{R}} \Big| K(y-x_t) - K(y-x_t') \Big|^2\cdot\Big|V_t(z)\Big|^2dy + 2\int_{\mathbb{R}} \Big| V_t(z) - V_t(z^\prime) \Big|^2\cdot\Big|K(y-x_t')\Big|^2dy\\
        &\begin{aligned}[t]
            \,\leq\,&2\int_{\mathbb{R}} \Big| K(y-x_t) - K(y-x_t') \Big|^2dy\\
            &+ 2\left((\lambda^2c_0)^2\int_0^t\int_0^s\big| z_r - z_r' \big|^2drds\right)\cdot M_K\int_{\mathbb{R}} K(y-x_t')dy
        \end{aligned}\\
        &\leq 2\int_{\mathbb{R}} \Big| K(y-x_t) - K(y-x_t') \Big|^2dy + 2M_K(\lambda^2c_0)^2\int_0^t\int_0^s\big| z_r - z_r' \big|^2drds\\
        &\leq 2(M_K^\prime)^2 \big| x_t - x_t' \big|^2 + 2M_K(\lambda^2c_0)^2\int_0^t\int_0^s\big| z_r - z_r' \big|^2drds,
    \end{align*}
    where the first addend of the last inequality is due to Plancherel's theorem.

    Then, \eqref{eq:proof_stability1} becomes
    \begin{align*}
        &\lVert u^m_K(t,\cdot) - u^{m^\prime}_K(t,\cdot) \rVert_2^2\\
        &\begin{aligned}[t]
            \,\leq\,C_2\Bigg(&\int_{C \times C } \sup_{s\leq t} \Big| X_s(\omega) - X_s(\omega') \Big|^2\pi\left(d\omega,d\omega'\right)\\
            &+\int_{C \times C }\int_0^t\int_0^s \Big|u(r,X_s(\omega)) - u(r,X_s(\omega')) \Big|^2\pi\left(d\omega,d\omega'\right)drds\Bigg)
        \end{aligned}\\
        &\leq\,C_2\left(1+C_1(t)\right)\int_{C \times C } \sup_{s\leq t} \Big| X_s(\omega)-X_s(\omega') \Big|^2\pi\left(d\omega,d\omega'\right),
    \end{align*}
    where $C_2:=2(M_K^\prime)^2+2M_K(\lambda^2c_0)^2$, and the second inequality is obtained by doing the same computations as in Proposition \ref{prop:stability_Feynman_Kac}. By taking the infimum over $\Pi(\omega,\omega')$,
    \begin{equation*}
        \lVert u^m_K(t,\cdot) - u^{m^\prime}_K(t,\cdot) \rVert_2^2 \,\leq\, C_2\Big( 1+C_1(t) \Big)D_t(m,m')^2
    \end{equation*}
    and we conclude.

\subsection{Proof of Lemma \ref{lemma:continuity_b}}\label{section:continuity_b}
    Again, we consider the integral  notation for $u^m_K$, i.e. $u^m_K(\cdot,y)(s):=\int_0^s u^m_K(r,y)dr$, and denote by $\varphi^m(t,y)=\varphi_0 + \varphi_1 c_0 \exp\left(-\lambda u^m_K(\cdot,y)(t)\right)\in [\varphi_0,\varphi_0+\varphi_1 c_0],$ for a measure $\mu\in \mathcal{P}(C )$. Now,
    \begin{equation*}
    \begin{split}
     &\Big| b\Big(u^m_K(\cdot,y)(t),\nabla u^m_K(\cdot,y)(t)\Big) - b\Big(u^{m^\prime}_K(\cdot,y)(t),\nabla u^{m^\prime}_K(\cdot,y)(t)\Big) \Big|\\
     &=\varphi_1\lambda c_0 \Bigg| \frac{e^{-\lambda u^m_K(\cdot,y)(t)}\nabla u^m_K(\cdot,y)(t)}{\varphi^m(t,y)}-\frac{e^{-\lambda u^{m^\prime}_K(\cdot,y)(t)}\nabla u^{m^\prime}_K(\cdot,y)(t)}{\varphi^{m^\prime}(t,y)} \Bigg|\\
          &\le\, \frac{\varphi_1\lambda c_0 }{\varphi_0^2}\Bigg| {e^{-\lambda u^m_K(\cdot,y)(t)}\varphi^{m^\prime}(t,y)\nabla u^m_K(\cdot,y)(t)}  - {e^{-\lambda u^{m^\prime}_K(\cdot,y)(t)}\varphi^{m}(t,y)\nabla u^{m^\prime}_K(\cdot,y)(t)}  \Bigg|\\
     &\le  \frac{\varphi_1\lambda c_0 }{\varphi_0^2}\Bigg|\varphi_0 {e^{-\lambda u^m_K(\cdot,y)(t)} \nabla u^m_K(\cdot,y)(t)} -\varphi_0 
     e^{-\lambda u^{m^\prime}_K(\cdot,y)(t) }  \nabla u^{m^\prime}_K(\cdot,y)(t)\\
     &  \hspace{2cm} +  
     \varphi_1 c_0 \,e^{-\lambda u^m(\cdot,y)(t)} 
                   e^{-\lambda u^{m^\prime}_K(\cdot,y)(t) }
     \left(
     \nabla u^m(\cdot,y)(t) -\nabla u^{m^\prime}_K(\cdot,y)(t)  \right) \Bigg|.
      \end{split}
    \end{equation*}
    Since  $e^{-x}\leq 1+x$ for any $ x\geq 0$, by \eqref{Lemma:exp(u^m_K)_bounded_1} and \eqref{Lemma:exp(u^m_K)_bounded_3},
    \begin{align*}
        &\left| b\big(t,u^m(\cdot,y)(t)\big) - b\big(t,u^{m^\prime}_K(\cdot,y)(t)\big) \right|\\
        &\begin{aligned}
            \,\leq\, &\frac{\varphi_1\lambda c_0}{\varphi_0}\Big|e^{-\lambda u^m_K(\cdot,y)(t)}\nabla u^m_K(\cdot,y)(t) - e^{-\lambda u^{m^\prime}_K(\cdot,y)(t)}\nabla u^{m^\prime}_K(\cdot,y)(t)\Big|\\
            &+ \frac{\varphi_1^2\lambda c_0^2}{\varphi_0^2}\Big|\nabla u^m_K(\cdot,y)(t) - \nabla u^{m^\prime}_K(\cdot,y)(t)\Big|
        \end{aligned}\\
        &\begin{aligned}
            \,\leq\, &\frac{\varphi_1\lambda c_0}{\varphi_0}e^{-\lambda u^m_K(\cdot,y)(t)}\Big|\nabla u^m_K(\cdot,y)(t) - \nabla u^{m^\prime}_K(\cdot,y)(t)\Big|\\
            &+ \frac{\varphi_1\lambda c_0}{\varphi_0}\Big|\nabla u^{m^\prime}_K(\cdot,y)(t)\Big|\Big|e^{-\lambda u^m_K(\cdot,y)(t)} - e^{-\lambda u^{m^\prime}_K(\cdot,y)(t)}\Big|\\
            &+\frac{\varphi_1^2\lambda c_0^2}{\varphi_0^2}\Big|\nabla u^m_K(\cdot,y)(t) - \nabla u^{m^\prime}_K(\cdot,y)(t)\Big|
        \end{aligned}\\
        &\leq \widetilde{C} \Big|u^m_K(\cdot,y)(t) - u^{m^\prime}_K(\cdot,y)(t)\Big| + \Big|\nabla u^m_K(\cdot,y)(t) - \nabla u^{m^\prime}_K(\cdot,y)(t)\Big|,
    \end{align*}
     where $\widetilde C= \max \left( {\varphi_1\lambda c_0}/{\varphi_0}, {\varphi_1\lambda c_0 M_K^\prime}/{\varphi_0},{\varphi_1^2\lambda c_0^2}/{\varphi_0^2} \right). $

\paragraph{Acknowledgments.} The research is carried out within the research project PON 2021(DM 1061, DM 1062) ``Deterministic and stochastic mathematical modelling and data analysis within the study for the indoor and outdoor impact of the climate and environmental changes for the degradation of the Cultural Heritage" of the Università degli Studi di Milano.
  

D.M. and S.U. 
  are members of GNAMPA (Gruppo Nazionale per l’Analisi Matematica, la Probabilità e le loro Applicazioni) of the Italian Istituto Nazionale di Alta Matematica (INdAM).
\printbibliography
\end{document}